\pgfplotsset{compat = newest}
\newcounter{TmpEnumi}
\numberwithin{equation}{section}
\def\today{\number\day\space\ifcase\month\or
 January\or February\or
   March\or April\or May\or June\or
    July\or August\or September\or
   October\or November\or December\fi\
     \number\year}
\theoremstyle{definition}
\newtheorem{thm}{Theorem}[section]
\newtheorem{lem}[thm]{Lemma}
\newtheorem{prp}[thm]{Proposition}
\newtheorem{dfn}[thm]{Definition}
\newtheorem{rmk}[thm]{Remark}
\newtheorem{ntn}[thm]{Notation}
\newtheorem{exa}[thm]{Example}
\newcommand{\beq}{\begin{equation}}
\newcommand{\eeq}{\end{equation}}
\newcommand{\beqr}{\begin{eqnarray*}}
\newcommand{\eeqr}{\end{eqnarray*}}
\newcommand{\bal}{\begin{align*}}
\newcommand{\eal}{\end{align*}}
\newcommand{\bei}{\begin{itemize}}
\newcommand{\eei}{\end{itemize}}
\newcommand{\limi}[1]{\lim_{{#1} \to \infty}}
\newcommand{\af}{\alpha}
\newcommand{\dt}{\delta}
\newcommand{\ep}{\varepsilon}
\newcommand{\ld}{\lambda}
\newcommand{\C}{{\mathbb{C}}}
\newcommand{\N}{{\mathbb{Z}}_{> 0}}
\newcommand{\Nz}{{\mathbb{Z}}_{\geq 0}}
\newcommand{\spec}{{\operatorname{sp}}}
\newcommand{\card}{{\operatorname{card}}}
\newcommand{\Aut}{{\operatorname{Aut}}}
\newcommand{\cK}{{\mathcal{K}}}
\newcommand{\andeqn}{\qquad {\mbox{and}} \qquad}
\newcommand{\Wolog}{Without loss of generality}
\newcommand{\tfae}{the following are equivalent}
\newcommand{\ca}{C*-algebra}
\newcommand{\CGAa}{C^* (G, A, \af)}
\newcommand{\cfn}{continuous function}
\renewcommand{\S}{\subset}
\title[Weak tracial approximate representability]{
Weakly tracially approximately representable actions}  
\author{M. Ali Asadi-Vasfi}
\curraddr{Department of Mathematics, University of Toronto, Toronto, Ontario, M5S~2E4,  Canada.}
\date{\today}
\subjclass[2010]{Primary 46L55;
 Secondary 19K14; 46L80.}
\keywords{Weak tracial approximate representability, duality,  simple C*-algebras, crossed product.}
\begin{document}
\maketitle
\begin{abstract}
We describe a weak tracial analog  of approximate representability under the name ``weak tracial approximate representability'' 
for finite group actions.
 We then investigate the dual actions on the crossed products by this class of group actions.
 Namely, let $G$ be a finite abelian group,
let $A$ be an  infinite-dimensional simple unital C*-algebra,
and let $\alpha \colon G \to \Aut (A)$ be an action of $G$ on $A$ which is pointwise outer.
Then $\alpha$ has the weak tracial Rokhlin property 
if and only if 
the dual action $\widehat{\alpha}$ of the  Pontryagin dual $\widehat{G}$ on the crossed product $C^*(G, A, \alpha)$
is weakly tracially approximately representable, and $\alpha$ is weakly tracially approximately representable 
if and only if 
 the dual action $\widehat{\alpha}$ has the weak tracial Rokhlin property.
This generalizes the results of Izumi in 2004 and Phillips in 2011 on the dual actions of finite abelian groups on unital
simple C*-algebras.
\end{abstract}
\tableofcontents
\section{Introduction}
Let $A$ be any C*-algebra, let $G$ be a locally compact abelian group, let 
$\alpha \colon G \to \Aut(A)$ be an action, and let $\widehat{G}$ be 
the Pontryagin dual of $G$.  
For every $\omega \in \widehat{G}$, there is an automorphism 
$\widehat{\alpha}_{\omega}$ of the crossed product $C^*(G, A, \alpha)$ given on $C_{c} (G, A, \alpha)$ by
$\widehat{\alpha}_{\omega} (a)(g) = \overline{\omega(g)} a(g)$ for $a \in C_{c} (G, A, \alpha)$ and $g \in G$. 
Moreover, 
$\widehat{\alpha} \colon \widehat{G} \to \Aut \left(C^*(G, A, \alpha)\right)$ 
is a continuous action of $\widehat{G}$ on $C^* (G, A, \alpha)$ 
which is called the dual action. (One can also use $\omega(g)$ in place of $\overline{\omega(g)}$.)
 In this regard, discovering structures and properties of the dual action $\widehat{\alpha}$  on the crossed product 
from structures and properties of the underlying action $\alpha$ or the other way around is always an important problem,
given the fact that $\widehat{\widehat{\alpha}}$ is essentially as same as $\alpha$ by the Takai duality.

The notion of approximate representability for finite group actions on C*-algebras was introduced by Izumi in \cite{Iz1}. He proved that 
an action $\alpha \colon G \to \Aut (A)$ of a finite abelian group on a unital separable C*-algebra has the Rokhlin
property if and only if the dual action $\widehat{\alpha}$ is approximately representable, and $\alpha$ is approximately representable if and only if the dual the dual action $\widehat{\alpha}$ has the Rokhlin property. 
This theorem is usually called 
the duality theorem for finite group actions  with the Rokhlin property.

In the setting of finite quantum group actions, Kodaka–Teruya 
studied the Rokhlin property and approximate representability in \cite{KoTe}.
Further, an appropriate notion of the Rokhlin property under the name 
``spatial Rokhlin property'' and appropriate version of approximate representability under the name
 ``spatial approximate representability'' were introduced in the setting of compact quantum groups by Gardella,  Kalantar, and Lupini in  \cite{GKL19}. The work of Izumi (the duality theorem for finite group actions) was generalized to these classes of quantum groups as well.
 Further, a version of the duality theorem for Rokhlin dimension, introduced by Hirshberg,  Winter,  and Zacharias in \cite{HWZ15},
  was obtained in \cite{GHS21} by Gardella,  Hirshberg, and  Santiago in the setting of representability dimensions and Rokhlin dimensions.
 
Even though a version of the tracial Rokhlin property for actions of non-finite compact groups has been recently defined in \cite{MP21}, the right version of the tracial approximate representability has not been known yet for this class of group actions.
 In the setting of finite groups,
Phillips in \cite{Ph11} defined the tracial analog of approximate representability under
the name ``tracial approximate representability” and proved that if $\alpha \colon G \to \Aut (A)$ is an action of 
a finite abelian group $G$ on an infinite-dimensional simple separable unital C*-algebra $A$ such that 
$C^*(G, A, \alpha)$ is simple, 
then $\alpha$
has the tracial Rokhlin property if and only if its dual $\widehat{\alpha}$ is tracially approximately
representable, and vice versa.
Nevertheless, the existence of projections may create an obstruction. Namely,
so many interesting C*-algebras are projectionless and, therefore, group actions on this class of C*-algebras
 with the Rokhlin property or even the tracial Rokhlin property are rare. 
 The need for filling this void has led to have a weaker version 
 of the tracial Rokhlin property under the name ``weak tracial Rokhlin property''. 
The idea is to replace Rokhlin projections by positive contractions. This idea
was introduced by Archey in \cite{ArThesis} and Hirshberg and Orovitz in \cite{HO13} with different names.
  Further,
 the interconnections among strong outerness, the weak tracial Rokhlin property, and finite Rokhlin have been thoroughly studied in  \cite{GHV22}. In particular, it has been established that for classifiable C*-algebras with compact trace space, the weak tracial Rokhlin property is equivalent to strong outerness.
This can be seen as a C*-algebraic counterpart of Jones' theorem concerning outer actions on the hyperfinite $II_1$-factor and greatly contributes to the significance and importance of the weak tracial Rokhlin property and its dual. 
Also, the existence of group actions with the weak tracial property (see \cite{HO13}) 
and the question of discovering an appropriate  definition of a ``weak tracial" analog of the
approximate representability for finite group actions, asked by Phillips,
 are part of our motivation 
 to think of  this class of the group actions and their duals.

In this paper,  we define a ``weak tracial” analog of the
approximate representability for finite group actions (see Definition~\ref{Week.Ap.Rep.Def}) 
and show in Theorem~\ref{MainThmCoaction} that an outer action of a finite abelian group 
on an infinite-dimensional simple unital C*-algebra
has the weak tracial Rokhlin property if and only if its dual is weakly tracially
approximately representable, and vice versa.
This is in fact a general duality result for finite group actions and generalizes the results of Izumi \cite{Iz1}
 in the setting of simple C*-algebras and the results of Phillips \cite{Ph14} on the dual actions.
Also, we give examples of finite group actions which are weakly tracially approximately representable but not tracially approximately representable 
(see
the discussion after Theorem~\ref{MainThmCoaction} 
and 
Example~\ref{Exam2})
and also some other examples with or without being weakly tracially approximately representable. (See
Example~\ref{Exam1}, 
Example~\ref{EXAmp3},  and Example~\ref{EXAmp4}.)

After this work was posted on arXiv, the definition of  the weak tracial approximate representability (Definition~\ref{Week.Ap.Rep.Def})
has been generalized  to non-finite discrete  groups and separable C*-algebras in \cite{Lee22}.   However, the main result of \cite{Lee22}, Theorem~3.21, is identical to  Theorem~\ref{MainThmCoaction} in this paper, with somewhat different methods. 
\subsection*{Acknowledgments}
Some parts of this work were carried out from May 2021 to June 2022 during the time that the  author was a postdoctoral fellow at the Czech Academy of Sciences. The author was funded by Czech Science Foundation GJ20-17488Y and RVO: 67985840. He is thankful to that institution for its hospitality, with special thanks to 
Andrey Krutov, Beata Kubiś, Wiesław Kubiś, Filip Rydlo, and Karen Strung.
Also, the author worked on some of this research while  visiting the Fields Institute from July 2023 to December 2023. He appreciates their hospitality and wants to give special thanks to  Bryan Eelhart, Miriam Schoeman, and Kirsten Vanstone.

The author would like to thank N. Christopher Phillips
for a number of  productive discussions. Lemma~\ref{L_0X12_ModPasn} and 
Lemma~\ref{L_0X19_Norm_fa_x_fa} were suggested by him, and are used here with his permission.
The author is also grateful to Bhishan Jacelon for a careful reading of a preliminary version of this paper,  and in particular finding a number of misprints.
Finally, 
the author sincerely thanks the referee for their a careful reading of the manuscript and for offering valuable comments that lead to Example \ref{Exam2}.
\section{Preliminaries }\label{Sec_Prelim}
%
In this  section, we begin by fixing some notation.
We then collect briefly 
some information on the Cuntz semigroup,
 the radius of comparison, and group actions with the weak tracial Rokhlin property  for easy reference and the convenience of the reader.
\begin{ntn}
Throughout, 
if $A$ is a \ca, or if $A = M_{\infty} (B)$
for a C*-algebra~$B$, we write $A_{+}$ for the
set of positive elements of $A$.
For $a \in A_+$, we denote by $(a -\ep)_+$ the element of $C^*(a)$ corresponding to the function $\max(0,t-\ep)$ on the spectrum of $a$.
 Also, we denote by $\cK$ the algebra of compact operators on a separable
and infinite-dimensional Hilbert space.
For an action $\alpha$ of a finite group $G$ on a unital C*-algebra $A$, 
we denote by $A^{\alpha}$ the fixed point algebra, given by
\[
A^{\alpha} = \big\{ a \in A \colon
\alpha_g (a) = a \mbox{ for all } g \in G \big\}.
\]
\end{ntn}
The following definition is originally from~\cite{Cun78}.
\begin{dfn}\label{Cuntz_def_property}
Let $A$ be a C*-algebra.
For any $a, b \in A_{+}$,
we say that $a$ is {\emph{Cuntz subequivalent to~$b$ in~$A$}},
written $a \precsim_{A} b$,
if there is a sequence $(y_n)_{n = 1}^{\infty}$ in $A$
such that
\[
\limi{n} y_n b y_n^* = a.
\]
If $a \precsim_{A} b$ and $b \precsim_{A} a$, 
we say that $a$ and $b$ are {\emph{Cuntz equivalent in~$A$}} and write $a \sim_{A} b$.
This relation is an equivalence relation. 
\end{dfn}
Part (\ref{PhiB.Lem_18_4_11}) of the following lemma is known as 
R{\o}rdam's Lemma. It is Proposition 2.4 of \cite{Ror92} or Theorem 2.7 of \cite{EP22}. 
 Part (\ref{PhiB.Lem_18_4_10.a}) is Lemma 2.5 of \cite{EP22}
 and part (\ref{Item_9420_LgSb_1_6})  follows immediately from (\ref{PhiB.Lem_18_4_10.a}) because
 $\| (a - \lambda) - (b- \lambda) \| < \varepsilon$.
\begin{lem}\label{PhiB.Lem_18_4}
Let $A$ be a \ca.
\begin{enumerate}
\item\label{PhiB.Lem_18_4_11}
Let $a, b \in A_{+}$.
Then \tfae:
\begin{enumerate}
\item\label{PhiB.Lem_18_4_11.a}
$a \precsim_A b$.
\item\label{PhiB.Lem_18_4_11.b}
$(a - \eta)_{+} \precsim_A b$ for all $\eta > 0$.
\item\label{PhiB.Lem_18_4_11.c}
For every $\eta > 0$, there is $\dt > 0$ such that
$(a - \eta)_{+} \precsim_A (b - \dt)_{+}$.
\end{enumerate}
\item\label{PhiB.Lem_18_4_10}
Let $a, b \in A_{+}$ and let $\ep > 0$.
If $\| a - b \| < \ep$, then:
\begin{enumerate}
\item\label{PhiB.Lem_18_4_10.a}
$(a - \ep)_{+} \precsim_A b$.
\item\label{Item_9420_LgSb_1_6}
For any $\ld > 0$,
we have $(a - \ld - \ep)_{+} \precsim_A (b - \ld)_{+}$.
\end{enumerate}
\end{enumerate}
\end{lem}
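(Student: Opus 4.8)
The plan is to treat this statement as a package of by now standard Cuntz‑comparison facts whose only non‑formal ingredient is Rørdam's lemma, and to deduce everything else from it. Concretely, I would use the following version of Rørdam's lemma (the content of \cite{Ror92}; see also \cite{KR00}): if $a, b \in A_{+}$ and $\| a - b \| < \ep$, then there is a contraction $d \in A$ with $(a - \ep)_{+} = d b d^{*}$. Granting this, part~(2)(a) is immediate, since the equation $(a - \ep)_{+} = d b d^{*}$ already exhibits $(a - \ep)_{+} \precsim_{A} b$ (use the constant sequence $y_n = d$).

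For part~(2)(b) I would first record the elementary functional‑calculus identity $(a - \ld - \ep)_{+} = \big( (a - \ep)_{+} - \ld \big)_{+}$, valid for every $a \in A_{+}$. Then I would write $(a - \ep)_{+} = d b d^{*}$ with $\| d \| \le 1$ from Rørdam's lemma, and use $b \le \ld \cdot 1 + (b - \ld)_{+}$: conjugating by $d$ and using $d d^{*} \le 1$ gives $d b d^{*} \le \ld \cdot 1 + d (b - \ld)_{+} d^{*}$, hence
\[
(a - \ld - \ep)_{+} = \big( d b d^{*} - \ld \big)_{+} \precsim_{A} d (b - \ld)_{+} d^{*} \precsim_{A} (b - \ld)_{+} .
\]
Here the first $\precsim_{A}$ uses the standard fact that $x \le y$ (self‑adjoint, with $y \ge 0$) implies $x_{+} \precsim_{A} y$, and the second uses $z c z^{*} \precsim_{A} c$ for $c \ge 0$. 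These two facts, together with transitivity of $\precsim_{A}$, scaling invariance $\mu c \sim_{A} c$ for $\mu > 0$, and $0 \le x \le y \Rightarrow x \precsim_{A} y$, form a small routine toolkit that I would set out (with references) at the start.

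Part~(1) then follows by formal manipulation. For (b)$\Rightarrow$(a): for each $n$ pick, from a sequence witnessing $(a - \tfrac1n)_{+} \precsim_{A} b$, an element $z_n$ with $\| z_n b z_n^{*} - (a - \tfrac1n)_{+} \| < \tfrac1n$; since $\| (a - \tfrac1n)_{+} - a \| \le \tfrac1n$, we get $z_n b z_n^{*} \to a$, so $a \precsim_{A} b$. For (c)$\Rightarrow$(b): $(b - \dt)_{+} \le b$ gives $(b - \dt)_{+} \precsim_{A} b$, and transitivity with the hypothesis yields $(a - \eta)_{+} \precsim_{A} b$ for every $\eta > 0$. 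For (a)$\Rightarrow$(c): given $\eta > 0$, choose $y$ with $\| y b y^{*} - a \| < \eta/2$ (possible since $a \precsim_{A} b$), then $\dt > 0$ small enough that $\| y (b - \dt)_{+} y^{*} - y b y^{*} \| \le \| y \|^{2} \dt < \eta/2$; applying part~(2)(a) to $a$ and $y (b - \dt)_{+} y^{*}$, followed by $y (b - \dt)_{+} y^{*} \precsim_{A} (b - \dt)_{+}$, gives $(a - \eta)_{+} \precsim_{A} (b - \dt)_{+}$. This closes the cycle (a)$\Rightarrow$(c)$\Rightarrow$(b)$\Rightarrow$(a).

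The only genuine obstacle is Rørdam's lemma itself — producing the contraction $d$ with $(a - \ep)_{+} = d b d^{*}$, which needs a functional‑calculus construction ($d = (a - \ep)_{+}^{1/2} f(a)$ for a suitable $f$ truncating $t \mapsto t^{-1/2}$) together with an approximation step to reach the sharp cutoff $\ep$; since the present lemma is explicitly quoted from \cite{KR00, Ph14, Ror92}, I would simply cite it rather than reprove it. The remaining care needed is the $\ep$/$\dt$/$\ld$ and contraction‑norm bookkeeping in part~(2)(b), where it is precisely the bound $\| d \| \le 1$ that legitimizes the passage from $d b d^{*}$ down to $(b - \ld)_{+}$.
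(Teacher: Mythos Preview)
The paper does not prove this lemma at all: it simply records it as ``taken from~\cite{KR00, Ph14, Ror92}''.  So there is no in-paper argument to compare against; your proposal supplies a complete proof where the paper only cites one.

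Your argument is correct.  Part~(2)(a) is exactly R{\o}rdam's lemma, and your cycle (a)$\Rightarrow$(c)$\Rightarrow$(b)$\Rightarrow$(a) for part~(1) is the standard one and is carried out cleanly.  For part~(2)(b) your route works, but the ``standard fact'' you invoke, that $x \le y$ with $x$ self-adjoint and $y \ge 0$ implies $x_{+} \precsim_{A} y$, is a shade less immediate than the toolkit items you list (one needs an $f_{\ep}(x_{+})$-cutoff to kill the $x_{-}$ contribution before appealing to $0 \le a \le b \Rightarrow a \precsim_{A} b$).  A shorter path avoids this entirely: since $t \mapsto (t-\ld)_{+}$ is $1$-Lipschitz, $\|(a-\ld)_{+} - (b-\ld)_{+}\| \le \|a-b\| < \ep$, so (2)(a) applied to $(a-\ld)_{+}$ and $(b-\ld)_{+}$ gives $\big((a-\ld)_{+} - \ep\big)_{+} \precsim_{A} (b-\ld)_{+}$, and the left side equals $(a-\ld-\ep)_{+}$.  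This uses only (2)(a) and a functional-calculus identity, with no extra comparison facts needed.
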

The following is Lemma~2.6 of \cite{AGP19}.
\begin{lem}\label{Lem.ANP.Dec.18}
Let $A$ be a unital \ca, let $a, g \in A$
satisfy $0 \leq a, g \leq 1$,
and let $\ep_1, \ep_2 \geq 0$.
Then
\[
\big( a - (\ep_1 + \ep_2) \big)_{+}
\precsim_{A} \big( (1 - g ) a (1 - g ) - \ep_1 \big)_{+}
         \oplus \Big( g - \frac{\ep_2 }{2} \Big)_{+}.
\]
\end{lem}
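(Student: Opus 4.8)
The plan is to split $a$ into two positive pieces, one controlled by $1-g$ and one by $g$, bound each piece by the corresponding summand on the right, and then recombine with a sum lemma. Since $0 \leq g \leq 1$, the element $2g - g^2 = 1 - (1-g)^2$ is positive and satisfies $2g - g^2 \leq 2g$, so
\[
a = a^{1/2}\big[(1-g)^2 + (2g - g^2)\big]a^{1/2}
 = a^{1/2}(1-g)^2 a^{1/2} + a^{1/2}(2g-g^2)a^{1/2}
 =: x + y ,
\]
with $x, y \in A_{+}$.

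First I would record the sum lemma: if $x, y \in A_{+}$ and $\ep_1, \ep_2 \geq 0$, then $(x + y - \ep_1 - \ep_2)_{+} \precsim_{A} (x - \ep_1)_{+} \oplus (y - \ep_2)_{+}$. Indeed $x \leq (x - \ep_1)_{+} + \ep_1$ and $y \leq (y - \ep_2)_{+} + \ep_2$, so the self-adjoint element $s = x + y - \ep_1 - \ep_2$ satisfies $s \leq z$ where $z = (x - \ep_1)_{+} + (y - \ep_2)_{+} \geq 0$; and a self-adjoint element dominated by a positive element $z$ has its positive part $\precsim_{A} z$ (a standard fact, which also follows from \Lem{PhiB.Lem_18_4} by cutting $s$ down with a continuous approximant of its positive spectral projection, conjugating $s \leq z$, and shrinking the cutoff). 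Hence $(x+y-\ep_1-\ep_2)_{+} = s_{+} \precsim_{A} z$, and $z = (x-\ep_1)_{+} + (y-\ep_2)_{+} \precsim_{A} (x-\ep_1)_{+} \oplus (y-\ep_2)_{+}$ by the elementary fact $c + d \precsim_{A} c \oplus d$ for $c, d \in A_{+}$. As a byproduct (take $s = c - \ep$ and note $c - \ep \leq d - \ep \leq (d-\ep)_{+}$), one gets $c \leq d \Rightarrow (c-\ep)_{+} \precsim_{A} (d-\ep)_{+}$, which I use below.

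Next I would identify the two pieces. Putting $v = (1-g)a^{1/2}$ gives $x = v^{*}v$ and $(1-g)a(1-g) = vv^{*}$, so the standard relation $(v^{*}v - \ep_1)_{+} \sim_{A} (vv^{*} - \ep_1)_{+}$ yields $(x - \ep_1)_{+} \sim_{A} \big((1-g)a(1-g) - \ep_1\big)_{+}$. Putting $u = (2g-g^2)^{1/2}a^{1/2}$ gives $y = u^{*}u$ and $uu^{*} = (2g-g^2)^{1/2}a(2g-g^2)^{1/2} \leq 2g - g^2 \leq 2g$, where the first inequality uses $0 \leq a \leq 1$; hence
\[
(y - \ep_2)_{+} \sim_{A} (uu^{*} - \ep_2)_{+}
 \precsim_{A} (2g - \ep_2)_{+} = 2\big(g - \tfrac{\ep_2}{2}\big)_{+}
 \sim_{A} \big(g - \tfrac{\ep_2}{2}\big)_{+} .
\]
Feeding these into the sum lemma and using that $\precsim_{A}$ respects $\oplus$ gives $\big(a - (\ep_1+\ep_2)\big)_{+} \precsim_{A} (x-\ep_1)_{+} \oplus (y-\ep_2)_{+} \precsim_{A} \big((1-g)a(1-g)-\ep_1\big)_{+} \oplus \big(g - \tfrac{\ep_2}{2}\big)_{+}$, which is the claim.

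The only delicate point is the sum lemma: since $t \mapsto (t-\ep)_{+}$ is not operator monotone, one cannot simply write $(x+y-\ep_1-\ep_2)_{+} \leq (x-\ep_1)_{+} + (y-\ep_2)_{+}$, so the passage to Cuntz subequivalence must instead go through the ``self-adjoint below positive'' fact and \Lem{PhiB.Lem_18_4}. Everything else is routine functional calculus together with the elementary properties of $\precsim_{A}$.
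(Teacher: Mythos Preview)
Your argument is correct. The paper itself does not supply a proof of this lemma; it merely cites it as Lemma~2.6 of \cite{AGP19}. So there is nothing in the present paper to compare your approach against directly.

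That said, your route is clean and self-contained: decompose $a = a^{1/2}(1-g)^2 a^{1/2} + a^{1/2}(2g-g^2)a^{1/2} = x + y$, apply the sum lemma $(x+y-\ep_1-\ep_2)_+ \precsim_A (x-\ep_1)_+ \oplus (y-\ep_2)_+$, and then identify each piece via $v^*v \sim vv^*$ and the monotonicity consequence $(c-\ep)_+ \precsim_A (d-\ep)_+$ for $0 \leq c \leq d$. The step you flag as delicate---passing from a self-adjoint $s \leq z$ with $z \geq 0$ to $s_+ \precsim_A z$---is justified exactly as you outline: conjugate the inequality $s \leq z$ by a continuous approximant $f_\ep(s)$ to the positive spectral projection of~$s$ so that $(s-\ep)_+ \leq f_\ep(s)\, s\, f_\ep(s) \leq f_\ep(s)\, z\, f_\ep(s) \precsim_A z$, then let $\ep \searrow 0$ via \Lem{PhiB.Lem_18_4}(\ref{PhiB.Lem_18_4_11}). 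The edge cases $\ep_1 = 0$ or $\ep_2 = 0$ cause no trouble, since there the relevant subequivalences reduce to the standard fact that $0 \leq c \leq d$ implies $c \precsim_A d$.
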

\begin{lem}
\label{lem210912}
Let $A$ be a unital C*-algebra, let $a\in A_+$ with $\|a \| \leq 1$, and let $\ep \in (0, 1)$.
Let $g_{\ep} \colon [0, 1] \to [0, 1]$ be the continuous function by
\[
g_{\ep} (\ld)
 = \begin{cases}
   \frac{1}{1-\ep} t & \hspace*{1em} t \leq 1 - \ep
        \\
   1  & \hspace*{1em} t \geq 1-\ep.
\end{cases}
\]
Then $(1 - a -\ep)_+ \sim_A 1 - g_{\ep} (a)$.
\end{lem}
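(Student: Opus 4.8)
The plan is to work inside the commutative C*-algebra $C^*(a) \cong C_0(\sigma(a))$ (or $C(\sigma(a))$, since $a$ is a positive contraction) and exhibit the Cuntz equivalence concretely via functional calculus, using the standard fact that for positive elements $f,h$ of a commutative C*-algebra, $f \precsim h$ in that algebra if and only if $\supp(f) \subseteq \{h > 0\}$ and (for the $\sim$ direction) one can match supports. First I would record that $1 - g_\ep(a)$ is the element of $C^*(a)$ corresponding to the function $\lambda \mapsto 1 - g_\ep(\lambda)$ on $\sigma(a) \subseteq [0,1]$; this function is $1 - \tfrac{1}{1-\ep}\lambda$ for $\lambda \le 1-\ep$ and $0$ for $\lambda \ge 1-\ep$, hence it is supported exactly on $\sigma(a) \cap [0, 1-\ep)$. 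Likewise $(1 - a - \ep)_+$ corresponds to $\lambda \mapsto \max(0, 1 - \lambda - \ep)$, which is supported exactly on $\sigma(a) \cap [0, 1-\ep)$ as well. So the two functions have the \emph{same} open support, namely $\{\lambda \in \sigma(a) : \lambda < 1-\ep\}$.

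The key step is then to observe that two nonnegative continuous functions on a locally compact space with the same support are Cuntz equivalent in the commutative C*-algebra they generate: concretely, given $f$ and $h$ with $\{f > 0\} = \{h > 0\}$, the function $f/h$ extends continuously by $0$ to all of $\sigma(a)$ (since near the boundary of the common support both vanish and the ratio stays bounded --- here both $1 - g_\ep(\lambda)$ and $\max(0,1-\lambda-\ep)$ are affine on $[0,1-\ep]$ and vanish to first order at $\lambda = 1-\ep$, so the quotient is in fact \emph{constant} equal to $1-\ep$ on the relevant interval, which makes this completely transparent), and then $f = (f/h)^{1/2} \, h \, (f/h)^{1/2}$ witnesses $f \precsim h$; symmetrically $h \precsim f$. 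Thus $(1-a-\ep)_+ \sim_{C^*(a)} 1 - g_\ep(a)$, and since Cuntz subequivalence in a subalgebra implies Cuntz subequivalence in $A$, we get $(1-a-\ep)_+ \sim_A 1 - g_\ep(a)$.

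Alternatively, and perhaps cleaner to write, I would avoid ratios and instead invoke \Lem{PhiB.Lem_18_4} directly: for each $\eta > 0$ one checks the elementary inequality of functions $\max(0, 1-\lambda-\ep-\eta) \le C_\eta \cdot (1 - g_\ep(\lambda))$ for a suitable constant, giving $\big((1-a-\ep)_+ - \eta\big)_+ \precsim_A 1 - g_\ep(a)$, and letting $\eta \to 0$ via part~\eqref{PhiB.Lem_18_4_11.b} yields $(1-a-\ep)_+ \precsim_A 1 - g_\ep(a)$; the reverse inequality is handled the same way. I do not anticipate a genuine obstacle here --- the statement is essentially a functional-calculus bookkeeping lemma --- the only point requiring a little care is the behavior at the endpoint $\lambda = 1-\ep$, where one must confirm that both functions vanish there with comparable order so that the comparison constants are finite; since both are (restrictions of) affine functions vanishing at $1-\ep$, this is immediate.
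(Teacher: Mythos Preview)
Your proposal is correct and takes essentially the same approach as the paper, which simply states that the result ``can be easily seen by a functional calculus argument.'' Your observation that the ratio of the two functions is the constant $1-\ep$ actually shows more: one has the identity $(1-a-\ep)_+ = (1-\ep)\bigl(1 - g_\ep(a)\bigr)$ in $C^*(a)$, so the two elements are nonzero scalar multiples of each other and the Cuntz equivalence is immediate without any limiting argument or appeal to \Lem{PhiB.Lem_18_4}.
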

\begin{proof}
It can be easily seen by a functional calculus argument.
\end{proof}
%
\subsection{Approximation Lemmas}
This subsection contains several approximation lemmas
which will be needed in the following sections.
\begin{lem}
\label{ArchyLemmaAp20}
Let $M\in (0, \infty)$ and  let $f \colon [0,\ M] \to \mathbb{C}$ be a  continuous function.  Then for every $\ep> 0$ there
exists $\dt> 0$ such that whenever $A$ is a C*-algebra and $x, z \in A$ satisfy
$
0 \leq x \leq M$,
$\| z \| \leq M$,
and
$\| xz - zx \| < \dt$,
then $\|f(x)z - z f(x)\| < \ep$.
\end{lem}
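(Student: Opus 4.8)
The statement is a standard "functional calculus commutes approximately with approximately-commuting elements" lemma, and the natural route is a two-step reduction: first polynomials, then uniform approximation. Concretely, the plan is as follows. By the Weierstrass approximation theorem, given $\ep > 0$, choose a polynomial $p$ with $\sup_{t \in [0, M]} |f(t) - p(t)| < \ep/4$. Since $0 \le x \le M$ forces $\| f(x) - p(x) \| < \ep/4$ by the continuous functional calculus, and similarly for any element with spectrum in $[0,M]$, we get
\[
\| f(x) z - z f(x) \|
\le \| f(x) - p(x) \| \, \| z \| + \| p(x) z - z p(x) \| + \| z \| \, \| p(x) - f(x) \|
< \tfrac{\ep}{2} M + \| p(x) z - z p(x) \|,
\]
so (after absorbing the harmless factor $M$ into the constants, i.e.\ replacing $\ep$ by $\ep / (M+1)$ at the outset) it suffices to handle the case $f = p$ a polynomial.

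For a polynomial $p(t) = \sum_{k=0}^{n} c_k t^k$, I would use the elementary estimate that if $\| xz - zx \| < \dt$ and $0 \le x \le M$, $\| z \| \le M$, then $\| x^k z - z x^k \| < k M^{k-1} \dt$. This follows by the telescoping identity
\[
x^k z - z x^k = \sum_{j=0}^{k-1} x^{j} (x z - z x) x^{k-1-j},
\]
each of whose $k$ terms has norm at most $M^{k-1} \dt$. Summing, $\| p(x) z - z p(x) \| \le \dt \sum_{k=1}^{n} |c_k| k M^{k-1} =: C_p \, \dt$, where $C_p$ depends only on $p$ and $M$. Thus choosing $\dt < \ep / (2 C_p)$ (and small enough for the polynomial-approximation step) makes $\| p(x) z - z p(x) \| < \ep/2$, and combining with the previous display gives $\| f(x) z - z f(x) \| < \ep$.

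There is essentially no serious obstacle here; the only point requiring a little care is the order of quantifiers. The constants $C_p$ and the polynomial $p$ depend on $f$, $M$, and $\ep$ but \emph{not} on the C*-algebra $A$ or the elements $x, z$, so the resulting $\dt$ is uniform over all $(A, x, z)$ satisfying the hypotheses, as the statement demands. One should also note that the argument never uses self-adjointness of $z$ — only the norm bound — and uses $0 \le x \le M$ only to control $\| x \|$ and the spectrum of $x$, so the telescoping estimate goes through verbatim.
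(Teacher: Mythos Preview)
Your argument is correct and is precisely the standard Weierstrass-plus-telescoping approach that the paper invokes by citing Lemma~2.5 of \cite{AP20}; the paper's own proof is simply a pointer to that reference with the remark that allowing general $M$ makes no difference. The only cosmetic point is that the constant bookkeeping in your first display (the factor $M$ from $\|z\|\leq M$) should be absorbed cleanly by choosing $p$ with $\sup_{[0,M]}|f-p|<\ep/(4M)$ rather than by the slightly informal ``replace $\ep$ by $\ep/(M+1)$'' aside, but the substance is the same.
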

\begin{proof}
The proof is as same as the proof of Lemma~2.5 of \cite{AP20}, except we allow here to use $M$. This change makes no difference. 
\end{proof}
\begin{lem}\label{approxlemmaArch}
Let $M\in (0, \infty)$,  let $f \colon [0,\ M] \to \mathbb{C}$ be a  continuous function,
and let $M>0$. Then for every $\ep> 0$ 
 there exists $\dt > 0$ such
that whenever $A$  is a  C*-algebra and  $x, y\in A_+$ satisfying $\|x\|, \|y \| \leq M$ and
$\|x - y\| < \dt$, then $\| f(x) - f(y)\|< \ep$.
\end{lem}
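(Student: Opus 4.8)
The statement to prove is \Lem{approxlemmaArch}, a standard continuity-of-functional-calculus estimate: given a continuous $f$ on $[0,M]$ and $\ep > 0$, there is $\dt > 0$ so that for any C*-algebra $A$ and $x, y \in A_+$ with $\|x\|, \|y\| \le M$ and $\|x - y\| < \dt$, one has $\|f(x) - f(y)\| < \ep$.

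The plan is to use the Stone--Weierstrass theorem to reduce to the polynomial case. First I would fix $\ep > 0$ and choose a polynomial $p$ with $\sup_{t \in [0, M]} |f(t) - p(t)| < \ep/3$; then for any positive element $z$ with $\|z\| \le M$ we have $\|f(z) - p(z)\| \le \sup_{t \in [0, M]} |f(t) - p(t)| < \ep/3$ by the norm inequality for the continuous functional calculus on the commutative C*-algebra $C^*(z)$ (or $C^*(z, 1)$), applied with $z = x$ and $z = y$. So it suffices to make $\|p(x) - p(y)\| < \ep/3$.

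For the polynomial step, write $p(t) = \sum_{k=0}^{n} c_k t^k$ and estimate $\|p(x) - p(y)\| \le \sum_{k=1}^{n} |c_k| \, \|x^k - y^k\|$ (the constant term cancels). Using the telescoping identity $x^k - y^k = \sum_{j=0}^{k-1} x^j (x - y) y^{k-1-j}$ together with $\|x\|, \|y\| \le M$, we get $\|x^k - y^k\| \le k M^{k-1} \|x - y\|$. Hence $\|p(x) - p(y)\| \le \Big( \sum_{k=1}^{n} |c_k| k M^{k-1} \Big) \|x - y\|$. Now choose $\dt > 0$ small enough that $\dt \cdot \sum_{k=1}^{n} |c_k| k M^{k-1} < \ep/3$ (if the sum is zero, any $\dt$ works). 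Then $\|x - y\| < \dt$ forces $\|p(x) - p(y)\| < \ep/3$, and combining the three estimates by the triangle inequality gives $\|f(x) - f(y)\| < \ep$.

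I do not expect any genuine obstacle here; the only points requiring a little care are that the polynomial approximation and the choice of $\dt$ must be made in the stated order (first $p$ depending only on $f, \ep, M$, then $\dt$ depending on $p, \ep, M$), so that $\dt$ indeed depends only on the data allowed by the statement and is uniform over all C*-algebras $A$ and all admissible $x, y$; and that when invoking $\|g(z)\| \le \|g\|_{C([0,M])}$ one should note that $\spec(z) \subseteq [0, M]$ since $0 \le z$ and $\|z\| \le M$. This is essentially the same argument as \Lem{ArchyLemmaAp20} but without the commutator hypothesis, so it could alternatively be cited; I would nonetheless spell out the short proof as above.
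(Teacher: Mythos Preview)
Your argument is correct and is the standard Stone--Weierstrass plus telescoping proof. The paper does not actually give a proof: it simply cites Lemma~VI.11 of \cite{ArThesis} for the case $M=1$ and remarks that the general case is the same, so your write-up is in fact more complete than what appears in the paper (and is almost certainly the argument behind the cited lemma).
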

\begin{proof}
The case $M = 1$ is Lemma~VI.11 in \cite{ArThesis}. The proof of this version is the same.
\end{proof}
The following lemma is preparation for Lemma~\ref{L_0X19_Norm_fa_x_fa}.
\begin{lem}\label{L_0X19_BigNorm}
Let $H$ be a Hilbert space,
let $a \in L (H)$ satisfy $0 \leq a \leq 1$,
and let $\xi \in H$ satisfy $\| \xi \| = 1$.
Then $\| a \xi - \xi \| \leq \sqrt{1 - \| a \xi \|^2}$.
\end{lem}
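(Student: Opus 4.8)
The plan is to reduce to an elementary scalar inequality via the spectral theorem. First I would compute $\|a\xi - \xi\|^2$ directly by expanding the inner product:
\[
\| a\xi - \xi \|^2 = \langle (a-1)\xi, (a-1)\xi \rangle = \langle (1-a)^2 \xi, \xi \rangle.
\]
The goal is therefore to show $\langle (1-a)^2 \xi, \xi \rangle \leq 1 - \|a\xi\|^2 = 1 - \langle a^2 \xi, \xi\rangle$, i.e.\ $\langle \big[ (1-a)^2 + a^2 \big] \xi, \xi \rangle \leq 1$. Since $\|\xi\| = 1$, it suffices to prove the operator inequality $(1-a)^2 + a^2 \leq 1$.

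Next I would verify this operator inequality using continuous functional calculus: since $0 \leq a \leq 1$, the spectrum of $a$ lies in $[0,1]$, so it is enough to check that the scalar function $t \mapsto (1-t)^2 + t^2$ is bounded above by $1$ on $[0,1]$. Indeed $(1-t)^2 + t^2 = 1 - 2t + 2t^2 = 1 - 2t(1-t) \leq 1$ for $t \in [0,1]$, since $t(1-t) \geq 0$ there. Applying functional calculus gives $(1-a)^2 + a^2 \leq 1$ in $L(H)$, and pairing with the unit vector $\xi$ yields $\langle (1-a)^2 \xi, \xi\rangle + \langle a^2 \xi, \xi\rangle \leq 1$, which rearranges to $\|a\xi - \xi\|^2 \leq 1 - \|a\xi\|^2$. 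Taking square roots (the right-hand side is nonnegative since $\|a\| \leq 1$ forces $\|a\xi\| \leq 1$) completes the proof.

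There is essentially no main obstacle here; the only point requiring a word of care is that the quantity under the square root is nonnegative, which follows from $0 \leq a \leq 1 \Rightarrow \|a\xi\| \leq \|\xi\| = 1$. Everything else is the expansion of a norm and the trivial estimate $t(1-t) \geq 0$ on $[0,1]$ transported through functional calculus.
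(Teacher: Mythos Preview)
Your proof is correct and is essentially the same as the paper's: both expand $\|a\xi - \xi\|^2$ and reduce to the inequality $a \geq a^2$ for $0 \leq a \leq 1$. The only cosmetic difference is that the paper obtains $\langle a\xi, \xi\rangle \geq \|a\xi\|^2$ via $\|a^{1/2}\xi\| \geq \|a\xi\|$, whereas you package the same fact as the operator inequality $(1-a)^2 + a^2 = 1 - 2a(1-a) \leq 1$ via functional calculus.
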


\begin{proof}
We have
$\| a^{1/2} \xi \| \geq \| a^{1/2} a^{1/2} \xi \| = \| a \xi\|$.
So $\langle a \xi, \xi \rangle \geq \| a \xi\|^2$.
Now we have
\[
\| a \xi - \xi \|^2
  = \| a \xi \|^2 - 2 \langle a \xi, \xi \rangle + \| \xi \|^2
  \leq \| a \xi \|^2 - 2 \| a \xi\|^2 + 1
  = 1 - \| a \xi \|^2.
\]
The result follows.
\end{proof}

\begin{lem}\label{L_0X19_Norm_fa_x_fa}
Let $f \colon [0, 1] \to \C$ be a \cfn{} such that
$f (1) = 1$.
Then for every $\ep > 0$ there is $\dt > 0$ such that whenever
$A$ is a \ca, $a, x \in A_{+}$ satisfy
$\| a \| \leq 1$, $\| x \| \leq 1$, and
$\| a x a \| > 1 - \dt$,
then $\| f (a) x f (a) \| > 1 - \ep$.
\end{lem}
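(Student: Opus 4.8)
The plan is to reduce the assertion about $\|f(a) x f(a)\|$ to an assertion about a single vector, and then apply Lemma~\ref{L_0X19_BigNorm}. First I would represent $A$ faithfully and nondegenerately on a Hilbert space $H$ (passing to the universal representation, or working in $A^{**}$, so that the supremum defining the norm is attained or nearly attained). Since $\|a x a\| > 1 - \dt$ and $\|x\| \le 1$, we may write $x = (x^{1/2})^* x^{1/2}$, so $\|a x a\| = \|x^{1/2} a\|^2$, hence there is a unit vector $\xi \in H$ with $\|x^{1/2} a \xi\|^2 > 1 - \dt$, and therefore $\|a \xi\| \ge \|x^{1/2} a \xi\| > \sqrt{1 - \dt}$ (using $\|x^{1/2}\| \le 1$).

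Now I want to show $\|f(a) x f(a)\|$ is close to $1$. By the same factorization it suffices to produce a unit vector $\eta$ with $\|x^{1/2} f(a) \eta\|$ close to $1$; the natural candidate is $\eta = f(a)\xi / \|f(a)\xi\|$, or more simply I would estimate $\|x^{1/2} f(a) \xi\|$ directly and then divide by $\|f(a)\xi\| \le 1$ at the end. The key point is that $\|a\xi\| > \sqrt{1-\dt}$ is close to $1 = \|a\|_{\le}$, so by Lemma~\ref{L_0X19_BigNorm} applied with the element $a$ we get $\|a\xi - \xi\| \le \sqrt{1 - \|a\xi\|^2} < \sqrt{\dt}$, i.e.\ $\xi$ is nearly fixed by $a$. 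Then, since $f$ is continuous on $[0,1]$ with $f(1) = 1$, I would choose $\dt$ small enough (depending only on $f$ and $\ep$, via uniform continuity / a functional-calculus estimate as in Lemma~\ref{approxlemmaArch}) so that $\|f(a)\xi - \xi\|$ is also small: concretely, $\xi$ being nearly fixed by $a$ forces $a\xi$ to be concentrated spectrally near $1$, where $f$ is near $f(1)=1$, so $f(a)\xi$ is close to $\xi$. Combining, $x^{1/2} f(a) \xi$ is close to $x^{1/2}\xi$, and $\|x^{1/2}\xi\|^2 = \langle x\xi,\xi\rangle$; here I use that $\|a x a\| > 1 - \dt$ together with $\|a\xi - \xi\|$ small to conclude $\langle x \xi, \xi\rangle$ is close to $1$ as well (since $\langle x \xi,\xi\rangle$ is close to $\langle x a\xi, a\xi\rangle = \langle a x a \xi, \xi\rangle$, whose supremum over unit vectors is $> 1-\dt$ — though some care is needed since $\xi$ was chosen to nearly attain $\|axa\|$, which is exactly what we arranged above). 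Tracking the chain of estimates, $\|x^{1/2} f(a)\xi\| > 1 - \ep$ for $\dt$ small, hence $\|f(a) x f(a)\| \ge \|x^{1/2} f(a)\xi\|^2 / \|f(a)\xi\|^2 \cdot (\text{something}) > 1 - \ep$ after renormalizing.

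I expect the main obstacle to be bookkeeping rather than conceptual: making sure a \emph{single} $\dt$, depending only on $f$ and $\ep$ and not on $A$, $a$, or $x$, controls all the error terms simultaneously — the passage from "$\xi$ nearly fixed by $a$" to "$\xi$ nearly fixed by $f(a)$" must be made quantitative and uniform, which is where a lemma like Lemma~\ref{approxlemmaArch} (or a direct spectral estimate: $\|(1 - f(a))\xi\|^2 = \int |1 - f(\lambda)|^2 \, d\mu_\xi(\lambda)$, with $\mu_\xi$ the spectral measure, and $\int (1-\lambda)\, d\mu_\xi = 1 - \langle a\xi,\xi\rangle$ small forcing $\mu_\xi$ to concentrate near $1$) does the work. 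A secondary subtlety is the choice of vector: rather than representing on an arbitrary Hilbert space and worrying about whether the norm is attained, it is cleanest to fix $\dt$ first, then for given $A, a, x$ choose $\xi$ with $\|x^{1/2} a \xi\|^2 > 1 - \dt$ (possible since this is the supremum), and run the argument; no completeness or attainment issue arises. I would present the proof in that order: choose $\dt$ via the spectral/continuity estimate, pick the witnessing unit vector, apply Lemma~\ref{L_0X19_BigNorm}, and assemble the inequalities.
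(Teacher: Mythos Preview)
Your approach is essentially the same as the paper's: represent on a Hilbert space, choose a unit vector $\xi$ witnessing $\|axa\| > 1-\dt$, apply Lemma~\ref{L_0X19_BigNorm} to get $\|a\xi-\xi\|$ small, upgrade this to $\|f(a)\xi-\xi\|$ small, and assemble. The paper carries out the upgrade by polynomial approximation (choose a polynomial $g$ with $g(1)=1$ close to $f$, telescope $\|\xi - a^k\xi\| \le k\|\xi - a\xi\|$, then pass back to $f$), whereas you propose a spectral-measure/Markov argument; both are fine and uniform in $A,a,x$.

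One small point to repair: your final step uses $\|f(a)xf(a)\| = \|x^{1/2}f(a)\|^2$, which requires $f(a)^* = f(a)$, but the lemma allows $f\colon [0,1]\to\C$. The paper handles this by also applying Lemma~\ref{L_0X19_BigNorm} to $x$ (since $\|x\xi\|$ is forced close to~$1$) to get $\|x\xi-\xi\|$ small, and then bounding $\|f(a)xf(a)\xi-\xi\|$ directly via the triangle inequality rather than through the $x^{1/2}$ factorization. With that adjustment your argument goes through for complex-valued $f$ as well.
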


\begin{proof}
Let $\ep > 0$.
\Wolog, $\ep < 1$.
Set $M = \sup_{\ld \in [0, 1]} |f (\ld)|$.
Then $M \geq 1$.
Choose a polynomial function $g$ such that
$g (1) = 1$ and for $\ld \in [0, 1]$ we have
\[
| g (\ld) - f (\ld) | < \frac{\ep}{3 (2 M + 1)}.
\]
Write
$g (\ld) = \sum_{k = 0}^n \af_k \ld^k$
with $\af_0, \af_1, \ldots, \af_n \in \C$.
Then
\[
\sum_{k = 0}^n \af_k = 1
\andeqn
\sup_{\ld \in [0, 1]} |g (\ld)| \leq M + 1.
\]
Set $R = \sum_{k = 0}^n k |\af_k|$.
Choose $\dt > 0$ so small that
\[
2 (M + 2) R \dt^{1/2} + 3 (M + 1) \dt^{1/4}
  < \frac{\ep}{3}.
\]
In particular, $\dt < 1$.

Let $A$, $a$, and $x$ be as in the hypotheses,
with this choice of~$\dt$.
We can assume $A \S L (H)$ for some Hilbert space~$H$,
and if $A$ is unital, we can assume its identity is the
identity operator on~$H$.

Since $\| a x a \| > 1 - \dt$,
there is $\xi \in H$ such that
$\| \xi \| = 1$ and $\| a x a \xi \| > 1 - \dt$.
Clearly $\| a \xi \| > 1 - \dt$,
so Lemma~\ref{L_0X19_BigNorm} implies
\[
\| a \xi - \xi \|
 < \sqrt{1 - (1 - \dt)^2}
 = \sqrt{2 \dt - \dt^2}
 < 2 \dt^{1/2}.
\]
Similarly $\| a x a \xi \| \leq \| x a \xi \|$, so
\[
\| x \xi \|
 \geq \| x a \xi \| - \| x \| \| a \xi - \xi \|
 > 1 - \dt - 2 \dt^{1/2}
 > 1 - 3 \dt^{1/2},
\]
and Lemma~\ref{L_0X19_BigNorm} implies
\[
\| x \xi - \xi \|
 < \sqrt{1 - \bigl( 1 - 3 \dt^{1/2} \bigr)^2}
 = \sqrt{6 \dt^{1/2}- 9 \dt}
 < 3 \dt^{1/4}.
\]

For $k \in \Nz$,
we have
\[
\| \xi - a^k \xi \|
 \leq \| \xi - a \xi \| + \| a \| \cdot \| \xi - a \xi \|
          + \cdots + \| a^{k - 1} \|  \cdot \| \xi - a \xi \|
 \leq 2 k \dt^{1/2}.
\]
Therefore, using $\sum_{k = 0}^n \af_k = 1$,
\[
\| \xi - g (a) \xi \|
 \leq \sum_{k = 0}^n | \af_k | \| \xi - a^k \xi \|
 \leq \sum_{k = 0}^n 2 k \dt^{1/2} |\af_k|
 = 2 R \dt^{1/2}.
\]
Now, using this, we get
\[
\begin{split}
\| g (a) x g (a) \xi - \xi \|
& \leq \| g (a) \| \cdot \| x \| \cdot \| g(a) \xi - \xi \|
        + \| g (a) \| \cdot \| x \xi - \xi \|
        + \| g(a) \xi - \xi \|
\\
& < (M + 1) \cdot 2 R \dt^{1/2}
    + (M + 1) \cdot 3 \dt^{1/4} + 2 R \dt^{1/2}
  < \frac{\ep}{3}.
\end{split}
\]
It follows that $\| g (a) x g (a) \xi \| > 1 - \frac{\ep}{3}$,
so $\| g (a) x g (a) \| > 1 - \frac{\ep}{3}$.

The choice of $g$ implies that
\[
\| g (a) - f (a) \| \leq \frac{\ep}{3 (2 M + 1)},
\]
so we now have
\[
\begin{split}
\| f (a) x f (a) \|
& > \| g (a) x g (a) \| - \| g (a) - f (a) \| \cdot \| x \| \cdot \| g (a) \|
\\       
& \qquad - \| f (a) \| \cdot \| x \|  \cdot \| g (a) - f (a) \|
\\
& > 1 - \frac{\ep}{3} - \frac{\ep (M + 1)}{3 (2 M + 1)}
        - \frac{\ep M}{3 (2 M + 1)}
  \geq 1 - \ep.
\end{split}
\] 
This completes the proof.
\end{proof}
\subsection{Weak Tracial Rokhlin Property}
In this subsection, we recall the definition of the weak tracial Rokhlin property and prove some equivalent conditions for it.
\begin{dfn}\label{W_T_R_P_def}
Let $G$ be a finite group,
let $A$ be a simple unital \ca,
and let $\alpha \colon G \to \Aut (A)$  be an action of
$G$ on $A$.
We say that $\alpha$ has the
\emph{weak tracial Rokhlin property} if for every $\ep > 0$,
every finite set $F \subset A$, and every positive
element $x \in A$ with $\| x \| = 1$,
there exist orthogonal positive contractions
$f_g \in A$ for $g \in G$ such that,
with $f = \sum_{g \in G} f_g$, the following hold:
\begin{enumerate}
\item\label{Def.w.t.r.p.a}
$\| a f_g - f_g a \| < \ep$ for all $g \in G$ and all $a \in F$.
\item\label{Def.w.t.r.p.b}
$\| \alpha_{g} ( f_h ) - f_{g h} \| < \ep$ for all $g, h \in G$.
\item\label{Def.w.t.r.p.c}
$1 - f \precsim_A x$.
\item\label{Def.w.t.r.p.d}
$\|  f x f \| > 1 - \ep$.
\end{enumerate}
\end{dfn}
In the following lemma,  we show that orthogonality of the contractions $f_g$ for $g \in G$ in Definition~\ref{W_T_R_P_def} can be replaced by approximate orthogonality. Further, we show that  $f$ in Definition~\ref{W_T_R_P_def} can be chosen in the fixed point algebra $A^\alpha$,  at the cost of replacing orthogonality by
 approximate orthogonality of the contractions $f_g$ for $g \in G$.  This will be useful in the proofs of Proposition~\ref{ProWTRP->WTAR} and Proposition~\ref{ProWTAR->WTRP}.
\begin{lem}\label{invariant_contractions}
Let $G$ be a finite group,
let $A$ be an infinite-dimensional simple unital \ca,
and let $\alpha \colon G \to \Aut (A)$  be an action of
$G$ on $A$.
Then the following are equivalent:
\begin{enumerate}
\item\label{MainS1}
 $\alpha$ has the weak tracial Rokhlin property.
 \item\label{MainS2}
For every $\ep > 0$,
every finite set $F \subset A$, and every positive
element $x \in A$ with $\| x \| = 1$,
there exist positive contractions
$f_g \in A$ for $g \in G$ such that,
with $f = \sum_{g \in G} f_g$,
the following hold:
\begin{enumerate}
\item\label{W.T.R.P.66_0}
$\| f_g f_h \| < \ep$
for all $g, h \in G$.
\item\label{W.T.R.P_11_0}
$\| a f_g - f_g a \| < \ep$
for all $g \in G$ and all $a \in F$.
\item\label{W.T.R.P_22_0}
 $\| \alpha_{g} ( f_h ) - f_{g h} \| < \ep$ for all $g, h \in G$.
\item\label{W.T.R.P.33_0}
$(1 - f - \ep)_{+} \precsim_A x$.
\item\label{W.T.R.P.44_0}
$\| f x f \| > 1 - \ep$.
\end{enumerate}
\item\label{MainS3}
For every $\ep > 0$,
every finite set $F \subset A$, and every positive
element $x \in A$ with $\| x \| = 1$,
there exist positive contractions
$f_g \in A$ for $g \in G$ such that,
with $f = \sum_{g \in G} f_g$,
the following hold:
\begin{enumerate}
\item\label{W.T.R.P.66}
$\| f_g f_h \| < \ep$
for all $g, h \in G$.
\item\label{W.T.R.P_11}
$\| a f_g - f_g a \| < \ep$
for all $g \in G$ and all $a \in F$.
\item\label{W.T.R.P_22}
 $\alpha_{g} ( f_h ) = f_{g h}$ for all $g, h \in G$.
\item\label{W.T.R.P.33}
$(1 - f - \ep)_{+} \precsim_A x$.
\item\label{W.T.R.P.44}
$\| f x f \| > 1 - \ep$.
\item\label{W.T.R.P.55}
 $f \in A^{\alpha}$  and $\| f \| = 1$.
\end{enumerate}
\setcounter{TmpEnumi}{\value{enumi}}
\end{enumerate}
\end{lem}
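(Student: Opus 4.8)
The plan is to prove the cycle of implications $(\ref{MainS3}) \Rightarrow (\ref{MainS2}) \Rightarrow (\ref{MainS1}) \Rightarrow (\ref{MainS3})$, since $(\ref{MainS3}) \Rightarrow (\ref{MainS2})$ is immediate (exact equivariance $\alpha_g(f_h) = f_{gh}$ implies the approximate version, and the remaining conditions match verbatim), and $(\ref{MainS2}) \Rightarrow (\ref{MainS1})$ is the substantive direction that upgrades approximate orthogonality back to exact orthogonality. The only genuinely new content is therefore packaged into two steps: removing approximate orthogonality, and averaging $f$ into the fixed point algebra.

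For $(\ref{MainS2}) \Rightarrow (\ref{MainS1})$: given $\ep > 0$, a finite set $F$, and $x$ with $\|x\| = 1$, apply $(\ref{MainS2})$ with a much smaller tolerance $\dt > 0$ (to be chosen) and with $F$ enlarged appropriately, obtaining positive contractions $f_g$ with $\|f_g f_h\| < \dt$ for $g, h \in G$ (in particular for $g \ne h$). The standard trick is to use the fact that $|G|$ elements which are pairwise almost orthogonal are close to genuinely orthogonal positive contractions: set $f = \sum_g f_g$ and pass to the elements $e_g = f_g - f_g\big(\sum_{h \ne g} f_h\big)f_g$ or, more cleanly, use a functional calculus / perturbation argument (as in the Rokhlin-property literature) to produce orthogonal positive contractions $\widetilde{f}_g$ with $\|\widetilde{f}_g - f_g\| < \eta(\dt)$ where $\eta(\dt) \to 0$ as $\dt \to 0$. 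Then $\widetilde{f} = \sum_g \widetilde{f}_g$ is a positive contraction with $\|\widetilde{f} - f\| < |G|\eta(\dt)$. Conditions (\ref{Def.w.t.r.p.a}) and (\ref{Def.w.t.r.p.b}) for the $\widetilde{f}_g$ follow from those for $f_g$ by choosing $\dt$ small; condition (\ref{Def.w.t.r.p.d}), $\|\widetilde{f}x\widetilde{f}\| > 1 - \ep$, follows from $\|fxf\| > 1 - \dt$ and norm closeness; and condition (\ref{Def.w.t.r.p.c}), $1 - \widetilde{f} \precsim_A x$, is obtained from $(1 - f - \dt)_+ \precsim_A x$ together with Lemma~\ref{PhiB.Lem_18_4}\eqref{PhiB.Lem_18_4_10} applied to $\|(1 - \widetilde{f}) - (1 - f)\| < |G|\eta(\dt) < \dt$ — more precisely, $1 - \widetilde{f} \precsim_A (1 - f - \dt)_+$ (using item~\eqref{PhiB.Lem_18_4_10}, since $\|(1-\widetilde f) - (1-f)\| < \dt$ forces $(1 - \widetilde f - \dt)_+ \precsim_A 1 - f$, and combining with Lemma~\ref{PhiB.Lem_18_4}\eqref{PhiB.Lem_18_4_11} one removes the $\dt$ at the cost of shrinking the original tolerance) and then transitivity with $(1-f-\dt)_+ \precsim_A x$. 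Keeping track of which $\dt$ to feed in is the bookkeeping heart of this step.

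For $(\ref{MainS1}) \Rightarrow (\ref{MainS3})$: start from the orthogonal positive contractions $f_g$ provided by the weak tracial Rokhlin property with a small tolerance $\dt$, with $\|f_g f_h\| = 0$ for $g \ne h$ and $f = \sum_g f_g$. The natural candidate for an invariant element is the average $\overline{f}_g = \frac{1}{|G|}\sum_{h \in G} \alpha_h(f_{h^{-1}g})$ — but averaging destroys exact orthogonality, so instead I would more carefully use that $\alpha_g(f_h) \approx f_{gh}$ to show that the family $\{f_g\}$ is already almost $G$-equivariant, and then correct it to an exactly equivariant family: define $\widetilde f_e$ to be a suitable perturbation of $\frac{1}{|G|}\sum_{h} \alpha_h(f_h)$ (which lies in $A^\alpha$ and is close to each $f_e$, hence close to $\frac{1}{|G|}f$... ) — actually the cleanest route is: set $\widetilde f_g = \alpha_g(p) f_g \alpha_g(p)$ or simply transport a single fixed-point contraction around the group. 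Concretely, put $e_0 = f_e$, note $\alpha_g(e_0) \approx f_g$, and define $\widetilde f_g := \alpha_g(e_0)$; then automatically $\alpha_g(\widetilde f_h) = \alpha_{gh}(e_0) = \widetilde f_{gh}$, giving exact equivariance (\ref{W.T.R.P_22}), while $\|\widetilde f_g - f_g\| = \|\alpha_g(e_0) - f_g\| < \dt$ transfers all the approximate conditions: approximate orthogonality (\ref{W.T.R.P.66}) from $\|f_g f_h\| = 0$, approximate commutation (\ref{W.T.R.P_11}) from (\ref{Def.w.t.r.p.a}) (enlarging $F$ to include the $\alpha$-translates if needed, and using that $\alpha_g$ is isometric), and $\|\widetilde f x \widetilde f\| > 1 - \ep$ from (\ref{Def.w.t.r.p.d}). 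For $\widetilde f = \sum_g \widetilde f_g$ one checks $\widetilde f \in A^\alpha$ directly from $\alpha_g(\widetilde f_h) = \widetilde f_{gh}$, and $\|\widetilde f\| = 1$: here I would use Lemma~\ref{lem210912} or a direct argument — $\|\widetilde f\| \le 1$ since it is a sum of nearly-orthogonal contractions (rescale slightly if the sum of $|G|$ contractions that are only $\dt$-orthogonal has norm slightly exceeding $1$, then renormalize, absorbing the error into $\ep$), and $\|\widetilde f\| \ge \|\widetilde f x \widetilde f\| / \|x\| > 1 - \ep$, so after adjusting one gets $\|\widetilde f\| = 1$ exactly by a further small functional-calculus correction. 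Finally the Cuntz domination (\ref{W.T.R.P.33}), $(1 - \widetilde f - \ep)_+ \precsim_A x$, follows from (\ref{Def.w.t.r.p.c}), $1 - f \precsim_A x$, via $\|\widetilde f - f\| < |G|\dt$ and Lemma~\ref{PhiB.Lem_18_4}\eqref{PhiB.Lem_18_4_10}: $(1 - \widetilde f - |G|\dt)_+ \precsim_A 1 - f \precsim_A x$, so taking $|G|\dt < \ep$ finishes it.

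I expect the main obstacle to be the normalization bookkeeping: ensuring $\widetilde f \in A^\alpha$ has norm \emph{exactly} $1$ while simultaneously maintaining $\|\widetilde f x \widetilde f\| > 1 - \ep$, exact equivariance, and the Cuntz estimate, all from a single small parameter $\dt$. The resolution is to choose $\dt$ at the very end after all the $\ep$-dependences are visible, apply Lemma~\ref{approxlemmaArch} to control functional calculus perturbations (replacing $\widetilde f$ by $g_{\ep}(\widetilde f)$ in the spirit of Lemma~\ref{lem210912} to fix the norm), and verify that such a functional-calculus modification preserves membership in $A^\alpha$ (because $g_\ep(\widetilde f) \in C^*(\widetilde f) \subset A^\alpha$), preserves $\|\cdot x \cdot\|$ up to $\ep$ (via Lemma~\ref{L_0X19_Norm_fa_x_fa}), and only improves or mildly perturbs $1 - \widetilde f$ in the Cuntz order. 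The approximate orthogonality and commutation conditions are robust under this last adjustment by Lemma~\ref{ArchyLemmaAp20} and Lemma~\ref{approxlemmaArch}.
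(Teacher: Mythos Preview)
Your overall cycle $(\ref{MainS3}) \Rightarrow (\ref{MainS2}) \Rightarrow (\ref{MainS1}) \Rightarrow (\ref{MainS3})$ is sound, and your approach to $(\ref{MainS1}) \Rightarrow (\ref{MainS3})$ via $\widetilde f_g := \alpha_g(f_e)$ is essentially what the paper imports from Lemma~3.3 of \cite{AGP19}; the paper simply cites that lemma and only proves $(\ref{MainS2}) \Rightarrow (\ref{MainS1})$.

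There is, however, a genuine gap in your $(\ref{MainS2}) \Rightarrow (\ref{MainS1})$ argument, precisely at the Cuntz condition~(\ref{Def.w.t.r.p.c}). You want the \emph{exact} subequivalence $1 - \widetilde f \precsim_A x$, but from $(1 - f - \dt)_+ \precsim_A x$ and $\|(1-\widetilde f) - (1-f)\| < \dt$, Lemma~\ref{PhiB.Lem_18_4}\eqref{PhiB.Lem_18_4_10} only gives you $(1 - \widetilde f - 2\dt)_+ \precsim_A x$. Your claim that ``Lemma~\ref{PhiB.Lem_18_4}\eqref{PhiB.Lem_18_4_11} removes the $\dt$'' does not work: that lemma says $a \precsim_A b$ iff $(a - \eta)_+ \precsim_A b$ for \emph{all} $\eta > 0$, but here $\widetilde f$ itself depends on the tolerance, so you cannot vary $\eta$ while keeping $\widetilde f$ fixed. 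Mere norm-closeness of $\widetilde f$ to $f$ can never upgrade an approximate Cuntz bound to an exact one.

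The paper's resolution is exactly the functional-calculus step you mention only in passing. After perturbing the $f_g$ to genuinely orthogonal contractions $b_g$ (via Lemma~2.5.12 of \cite{LinBook}), the paper does \emph{not} stop at $b = \sum_g b_g$ but passes to $d_g = g_\ep(b_g)$ and $d = g_\ep(b)$, where $g_\ep$ is the function of Lemma~\ref{lem210912}. The point of that lemma is the Cuntz equivalence $1 - g_\ep(b) \sim_A (1 - b - \ep)_+$, which converts the approximate condition $(1 - b - \ep)_+ \precsim_A x$ into the exact one $1 - d \precsim_A x$. The remaining conditions (\ref{Def.w.t.r.p.a}), (\ref{Def.w.t.r.p.b}), (\ref{Def.w.t.r.p.d}) for the $d_g$ then follow from those for the $b_g$ via Lemmas~\ref{ArchyLemmaAp20}, \ref{approxlemmaArch}, and~\ref{L_0X19_Norm_fa_x_fa}, which is why all three were set up in advance. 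So the functional calculus $g_\ep$ is not an optional polish for the norm --- it is the mechanism that produces the exact Cuntz subequivalence required by Definition~\ref{W_T_R_P_def}(\ref{Def.w.t.r.p.c}), and it must be applied to the orthogonal family $b_g$ (so that $g_\ep(b) = \sum_g g_\ep(b_g)$), not merely to the sum.
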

\begin{proof}
It follows from Lemma~3.3 of \cite{AGP19} that (\ref{MainS1}) implies (\ref{MainS3}). Also,   (\ref{MainS3}) clearly implies (\ref{MainS2}). So it suffices to prove that (\ref{MainS2}) implies (\ref{MainS1}).
To prove it,
 let $\ep\in (0, 1)$, let $F$ be a finite set in $A$, let $x \in A_+$ with $\| x \|=1$, and let $g_\ep$ be as in Lemma~\ref{lem210912}.
Assume $\card (G)= n$ for some $n \in \N$ and set $M= \max \left (1, \max\{ \| a \| \colon a \in F\} \right)$. 
Using Lemma~2.5.12 of \cite{LinBook}, Lemma~\ref{ArchyLemmaAp20}, Lemma~\ref{approxlemmaArch},
and Lemma~\ref{L_0X19_Norm_fa_x_fa},
 we can choose $\dt_1, \dt_2>0$ such that the following hold:
\begin{enumerate}
\setcounter{enumi}{\value{TmpEnumi}}
\item\label{approx1}
$\dt_1 < \dt_2 < \ep$.
\item\label{approx2}
If  $a_1, \ldots, a_n \in A_+$ 
with $\| a_j \|\leq 1$ for $j=1, \ldots, n$ such that 
$\| a_j a_k\| < \dt_1$ when $j \neq k$, then there are 
$b_1, \ldots, b_n \in A_+$ such that 
$b_j b_k = 0$ when $j \neq k$, $\| b_j \| \leq 1$, and 
$\| a_j - b_j \| < \frac{\dt_2}{12n^2M}$
for $j=1, \ldots, n$.
\item
\label{approx3}
If $x, z \in A$ satisfy
$
0 \leq x \leq 1,
\| z \| \leq M,
$
and
$\| xz - zx \| < \dt_2$,
then 
\[
\|g_{\ep} (x)z - z g_{\ep} (x)\| < \ep.
\]
\item\label{approx4}
If $x, z\in A_+$ with $\|x\|, \|z \| \leq 1$ and
$\|x - z\| < \dt_2$, then 
\[
\| g_{\ep}(x) - g_{\ep}(z)\|< \ep.
\]
\item
\label{approx5}
If $x, z \in A_{+}$ satisfy
$\| x\| \leq 1$, $\| z \| \leq 1$, and
$\| x z x\| > 1 - \dt_2$,
then 
\[
\| g_{\ep} (x) z g_{\ep} (x) \| > 1 - \ep.
\]
\setcounter{TmpEnumi}{\value{enumi}}
\end{enumerate}

Set 
$
\ep' = \frac{\dt_1}{6M^2 n}.
$
Using the conditions in Part (\ref{MainS2}) of the lemma with $\ep'$ in place of $\ep$,
 $F$, and $x\in A_+$ as given, 
we can find  positive contractions
$f_g \in G$ in $A$ for $g \in G$ such that, 
with $f = \sum_{g \in G} f_g$,  the following hold:
\begin{enumerate}
\setcounter{enumi}{\value{TmpEnumi}}
\item\label{W.T.R.P.66.a_0}
$\| f_g f_h \| < \ep'$
for all $g, h \in G$.
\item\label{W.T.R.P_11.b_0}
$\| a f_g - f_g a \| < \ep'$
for all $g \in G$ and all $a \in F$.
\item\label{W.T.R.P_22.c_0}
 $\| \alpha_{g} ( f_h ) - f_{g h}\| < \ep'$ for all $g, h \in G$.
\item\label{W.T.R.P.33.d_0}
$(1 - f - \ep')_{+} \precsim_A x$.
\item\label{W.T.R.P.44.e_0}
$\| f x f \| > 1 - \ep'$.
\setcounter{TmpEnumi}{\value{enumi}}
\end{enumerate}
By (\ref{approx2}), there are contractions $b_g \in A_+$ for $g \in G$ such that:
\begin{enumerate}
\setcounter{enumi}{\value{TmpEnumi}}
\item\label{Orthogonalelements1}
$b_g b_h = 0$ for all $g, h \in G$ with $g \neq h$.
\item\label{Orthogonalelements2}
$\| f_g - b_g \| < \frac{\dt_2}{12 n^2 M}$ for $g \in G$.
\setcounter{TmpEnumi}{\value{enumi}}
\end{enumerate}
Now, for every $g \in G$, set
\[
b= \sum_{g \in G} b_g,
\qquad
d_g = g_\ep (b_g),
\qquad
\mbox{and}
\qquad
d = g_\ep (b).
\]
By Lemma~VI.10 of \cite{ArThesis}, it is clear that $d=\sum_{g \in G} d_g$. Now, we claim that the following hold:
\begin{enumerate}
\setcounter{enumi}{\value{TmpEnumi}}
\item\label{Def'.w.t.r.p.a_0}
$d_g$ for $g \in G$ are mutually orthogonal positive contractions.
\item\label{Def'.w.t.r.p.a}
$\| a d_g - d_g a \| < \ep$ for all $g \in G$ and all $a \in F$.
\item\label{Def'.w.t.r.p.b}
$\| \alpha_{g} ( d_h ) - d_{g h} \| < \ep$ for all $g, h \in G$.
\item\label{Def'.w.t.r.p.c}
$1 - d \precsim_A x$.
\item\label{Def'.w.t.r.p.d}
$\|  d x d \| > 1 - \ep$.
\setcounter{TmpEnumi}{\value{enumi}}
\end{enumerate}
Since $b_g$ for $g \in G$ are mutually orthogonal positive contractions, Part (\ref{Def'.w.t.r.p.a_0}) is immediate.

To prove (\ref{Def'.w.t.r.p.a}), we use (\ref{Orthogonalelements2}) and (\ref{W.T.R.P_11.b_0}) at the second step to get
\[
\| b_g a - a b_g\| \leq \|b_g -f_g \| \cdot \| a\| + \| f_g a - a f_g\| + \| a\| \cdot \| f_g - b_g\|
<
\frac{\dt_2}{3}
+
\ep'
+
\frac{\dt_2}{3}
< \dt_2.
\]
Then, by (\ref{approx3}), we have
$\|d_g a - a d_g \| <\ep$.

To prove (\ref{Def'.w.t.r.p.b}), we use (\ref{Orthogonalelements2}) and (\ref{W.T.R.P_22.c_0}) at the second step to get
\[
\| \alpha_g (b_h)- b_{gh}\| 
\leq
\|\alpha_g (b_h) - \alpha_g (f_h) \|
+
\| \alpha_g (f_h) - f_{gh} \|
+
\| f_{gh} - b_{gh}\|
<
\frac{\dt_2}{3}
+
\ep'
+
\frac{\dt_2}{3}
< \dt_2.
\]
Then, by (\ref{approx4}), we have
$\| \alpha_g(d_h)- d_{gh}\| < \ep$.

To prove (\ref{Def'.w.t.r.p.c}), we use (\ref{Orthogonalelements2}) at the 
third step to get
\begin{equation}\label{Eq20210914}
\|(1 - f) - (1 - b) \| =
\| f  - b \|
\leq
\sum_{g\in G} \|f_g - b_g\|
<
\frac{\dt_2}{4n} <\frac{\dt_2}{2}.
\end{equation}
Using  Lemma~\ref{lem210912} with $b$ in place of $a$ at the second step, 
using Lemma~\ref{PhiB.Lem_18_4_10}(\ref{Item_9420_LgSb_1_6}) and (\ref{Eq20210914}) at the fourth step, 
and using (\ref{W.T.R.P.33.d_0})
at the last step, we get
\begin{align*}
1- d =1 - g_{\ep} (b) \sim_A (1 - b -\ep)_+ 
&\precsim_A 
(1 - b -\dt_2)_+ 
  \\ & \precsim_A 
  \left(1 - f -\tfrac{\dt_2}{2}\right)_+ 
\\&\precsim_A 
 \left(1 - f -\ep'\right)_+ \precsim_A x.
\end{align*}

To prove (\ref{Def'.w.t.r.p.d}), we use (\ref{Eq20210914}) and the fact that $\|f \|\leq n$ at the second step to get
\[
\|fxf - bxb \| \leq \|f - b \|\cdot \|x f \| +  \|bx\| \cdot \|f - b \| < \frac{\dt_2}{4} + \frac{\dt_2}{4} =\frac{\dt_2}{2}.
\]
Now, using this at the first step and using (\ref{W.T.R.P.44.e_0}) at the second step, we get
\[
\|bxb \|> \| fxf\| - \frac{\dt_2}{2}> 1- \ep' - \frac{\dt_2}{2}>  1 - \dt_2.
\]
Then, by (\ref{approx5}),  we have
$\|d x d \| > 1- \ep$.
This completes the proof.
\end{proof}
\section{The dual of  weakly tracially approximately representable actions}
In this section, we first give the definition of weakly tracially approximately representable actions. 
We then give a general duality theorem for a finite abelian group action when the action has
the weak tracial Rokhlin property or is weakly tracially approximately representable. 
 Further, we exhibit some examples to show that our theorems are not empty.

The following is a ``weak tracial" analog of approximate representability for finite group 
actions on infinite-dimensional simple unital C*-algebras. 
\begin{dfn}\label{Week.Ap.Rep.Def}
 Let $A$ be an infinite-dimensional simple unital \ca. An action $\alpha \colon  G \to \Aut(A)$ of a
finite group $G$ on $A$ is \emph{weakly tracially approximately representable} if for every finite
set $F \subset A$, every $\ep > 0$, and every
positive element $x \in A$ with $\|x\| = 1$, there are
 $c  \in (A^{\alpha})_+$ with $\| c \| \leq 1$ and 
 contractive elements  $s_g \in  \overline{c A c}$ 
 for $g \in G$ such that:
\begin{enumerate}
\item
\label{Week.Ap.Rep.Def.1}
$\| s_1 - c\|< \ep$ and $\|s^*_g - s_{g^{-1}}  \|< \ep$ for all $g \in G$.
\item
\label{Week.Ap.Rep.Def.2}
$\| s_g s_h - c s_{gh}\| < \ep$ for all $g, h \in G$.
\item
\label{Week.Ap.Rep.Def.3}
$\| c a - a c \| < \ep$ for all $a \in F \cup \{s_g \colon g \in G\}$.
\item
\label{Week.Ap.Rep.Def.4}
$\| \alpha_g (c a c) - s_g a  s^*_g \|< \ep$ for all $a \in F$ and all $g \in G$.
\item
\label{Week.Ap.Rep.Def.5}
$\| \alpha_g (s_h) - s_{g h g^{-1}}\| < \ep$ for all $g, h \in G$.
\item
\label{Week.Ap.Rep.Def.6}
$(1 - c - \ep)_+ \precsim_A x$.
\item
\label{Week.Ap.Rep.Def.7}
$\| c x c\| > 1-\ep$.
\end{enumerate}
\end{dfn}
It is clear that Condition (\ref{Week.Ap.Rep.Def.5}) reduces to $\| \alpha_g (s_h) - s_{h}\| < \ep$ if $G$ is abelian group.
We refer to Definition~4.1 of \cite{AV20} for a tracial version of Definition~\ref{Week.Ap.Rep.Def} 
in the setting of nonabelian finite groups  and 
 Definition~3.2 of~\cite{Ph11} in the setting of abelian finite groups and separable C*-algebras.

In the following lemma, we show that in the category of  pointwise outer actions of countable discrete groups on simple C*-algebras, for every positive element,  say $a$, in the crossed product of norm one,  
there is a positive contraction in the crossed product, say $z$, 
such that $zaz^* \in A_+ \setminus \{0\}$ and its norm is bounded below.
This lemma may be known and can be considered as a standard application of Kishimoto's lemma. We include a proof as we need to know some information on the norm of $z$ and $zaz^*$ when it is used
 in the proofs of Proposition~\ref{ProWTRP->WTAR} and Proposition~\ref{ProWTAR->WTRP}. 
\begin{lem}\label{L_0X12_ModPasn}
Let $\af \colon G \to \Aut (A)$
be an action
of a  countable discrete group $G$ on a unital simple  \ca~$A$
which is pointwise outer.
Identify $A$ with a subalgebra of $C^*_{\mathrm{r}} (G, A, \af)$
in the usual way.
Let $a \in C^*_{\mathrm{r}} (G, A, \af)_{+}$
satisfy $\| a \| = 1$.
Then for every $\ep > 0$ there is $z \in C^*_{\mathrm{r}} (G, A, \af)$
such that:
\begin{enumerate}
\item\label{Item_L_0X12_ModPasn_InA}
$z a z^* \in A_{+}$.
\item\label{Item_L_0X12_ModPasn_Normz}
$\| z \| \leq 1$.
\item\label{Item_L_0X12_ModPasn_NzazSt}
$\| z a z^* \| > 1 - \ep$.
\end{enumerate}
\end{lem}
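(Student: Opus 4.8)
The plan is to reduce this to an application of Lemma~\ref{L_0X12_ForCutdown}, with $B = A$ sitting inside $C = C^*_{\mathrm{r}} (G, A, \af)$. First I would fix $\ep > 0$ and, using the hypothesis $\|a\| = 1$ together with Lemma~\ref{PhiB.Lem_18_4_11}, pass to the element $a_0 = (a - \ep')_+$ for a small $\ep' \in (0, \ep)$; this is still positive in $C$, still of norm $1 - \ep'$ after rescaling, and — crucially — we can take $b = (a - \ep'/2)_+ \in C_+$, so that $b a_0 = a_0$ (up to rescaling $a_0$ to norm $1$). The role of $b$ is to provide the ``cutdown'' hereditary subalgebra $\overline{b C b}$ in which our eventual element $x$ from Lemma~\ref{L_0X12_ForCutdown} will live. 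The point of pointwise spectral nontriviality is precisely to supply, inside this hereditary subalgebra, a nonzero element $x \in \overline{b C b}$ with $x x^* \in A$: this is the content of Definition~1.3 of~\cite{PP15} applied to (a spectral subspace of) the separable hereditary subalgebra generated by $b$. So the key intermediate step is:

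\begin{itemize}
\item[(i)] produce $b \in C_+$ with $b a = b$ (after a norm-one renormalization of a cut-down of $a$), and
\item[(ii)] invoke pointwise spectral nontriviality to obtain $0 \neq x \in C$ with $x^* x \in \overline{b C b}$ and $x x^* \in A$.
\end{itemize}

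With (i) and (ii) in hand, Lemma~\ref{L_0X12_ForCutdown} (applied with $A \rightsquigarrow C$, $B \rightsquigarrow A$, $a \rightsquigarrow$ the renormalized cut-down of $a$) yields $y \in C$ with $\|y\| = 1$, $y a y^* \in A$, and $\|y a y^*\| = 1$; here one must check $y a y^* \geq 0$, which is automatic since $y a y^* = (y a^{1/2})(y a^{1/2})^*$. Finally I would set $z = y$ and track the effect of the cut-down: because $y a y^*$ computed against the renormalized cut-down has norm $1$, the corresponding value against the original $a$ is at least $(1-\ep')(1-\ep'/2)^{-1}$ or similar, which exceeds $1 - \ep$ for $\ep'$ chosen small enough at the outset. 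This gives items~(\ref{Item_L_0X12_ModPasn_InA}), (\ref{Item_L_0X12_ModPasn_Normz}), and~(\ref{Item_L_0X12_ModPasn_NzazSt}).

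The main obstacle I anticipate is step~(ii): correctly extracting from the definition of pointwise spectral nontriviality an element $x$ with both $x^* x \in \overline{b C b}$ and $x x^* \in A$ simultaneously. The definition in~\cite{PP15} controls spectral subspaces of the dual coaction, and one needs to localize it to the hereditary subalgebra $\overline{b C b}$ — likely by first shrinking to a suitable $b$ supported near the top of the spectrum of $a$ and then choosing $x$ inside a nonzero spectral subspace of that corner, so that $x x^*$ lands in the fixed-point part, which here is $A$. A secondary, purely bookkeeping, nuisance is the chain of $\ep$-estimates relating norms computed against $a$ versus against $(a - \ep')_+$; this is routine given Lemma~\ref{PhiB.Lem_18_4} but must be arranged so that all the small parameters are chosen in the right order before $x$ and $y$ are produced.
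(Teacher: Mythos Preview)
Your overall strategy---reduce to Lemma~\ref{L_0X12_ForCutdown} by building a norm-one cut-down $a_0$ of $a$, a positive $b$ with $a_0 b = b$, and an $x$ with $x^* x \in \overline{bCb}$, $xx^* \in A$---is exactly the paper's approach. But there are two concrete problems in the execution.

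First, and most seriously: you cannot simply take $z = y$. Lemma~\ref{L_0X12_ForCutdown} gives $y a_0 y^* \in A$, not $y a y^* \in A$. With $a_0$ merely a functional-calculus cut-down of $a$, the element $y a y^*$ has no reason to lie in~$A$; your norm estimate $\|y a y^*\| \geq (1-\ep')\|y a_0 y^*\|$ is fine, but item~(\ref{Item_L_0X12_ModPasn_InA}) fails. The paper repairs this by choosing, in addition to the plateau functions $l$ and $k$ that define $a_0 = l(a)$ and $b = k(a)$, a third function $f$ with $f(\ld)^2 \ld = l(\ld)$, so that $f(a)\, a\, f(a) = a_0$ exactly. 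Then $z = (1-3\ep_0)^{1/2}\, y\, f(a)$ satisfies $z a z^* = (1-3\ep_0)\, y a_0 y^* \in A$ with the right norm, and the scalar $(1-3\ep_0)^{1/2}$ compensates for $\|f(a)\| \leq (1-3\ep_0)^{-1/2}$ to keep $\|z\|\leq 1$. This ``square-root'' factor is the missing idea.

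Second, your choices $a_0 = (a-\ep')_+$ and $b = (a-\ep'/2)_+$ do not satisfy the hypothesis $a_0 b = b$ of Lemma~\ref{L_0X12_ForCutdown} (nor your claimed $b a_0 = a_0$): neither function equals $1$ on the support of the other. You need genuine plateau functions, with $l \equiv 1$ on $\supp(k)$, exactly as the paper does.

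Finally, step~(ii) is simpler than you fear. Pointwise spectral nontriviality is used only through Lemma~3.2 of~\cite{PP15}, which directly provides $z_0 \in C^*_{\mathrm r}(G,A,\af)$ with $z_0 b z_0^* \in A \setminus \{0\}$; then $x = z_0 b^{1/2}$ has $xx^* = z_0 b z_0^* \in A$ and $x^* x = b^{1/2} z_0^* z_0 b^{1/2} \in \overline{bCb}$. No dual-coaction spectral analysis is needed.
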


\begin{proof}
Let $\ep \in (0, 1)$. Choose $\lambda$ such that $1- \ep < \lambda < 1$. 
Set $b= (a - \lambda)_+$. 
We use Lemma~4.2 of \cite{GGNV22} to get $c \in A_+\setminus\{0\}$ such that 
$c \precsim_{ C^*_{\mathrm{r}} (G, A, \af) }  b$. Note that separability of the C*-algebra in Lemma~4.2 of \cite{GGNV22} is not needed. We may assume $\| c \| =1$. Now, we choose 
$\delta \in (0, 1)$ such that $(1- \delta) \lambda> 1- \ep$.
We use Lemma~2.7 of \cite{AGP19} to get $w_0 \in C^*_{\mathrm{r}} (G, A, \af)$ such taht 
\begin{equation}
\label{202309.05.1}
\| w_0 a w_0^* -c\|<\delta 
\qquad
\mbox{and}
\qquad
\|w_0 \| \leq \lambda^{-1/2}.
\end{equation}
Using Lemma~2.2 of \cite{KR00}, we get a contraction $d \in C^*_{\mathrm{r}} (G, A, \af)$ such that 
\begin{equation}
\label{2023.09.06.II}
dw_0 a w_0^* d^* = (c- \delta)_+.
\end{equation}
Now, set $z= \frac{1}{\| w_0\|} \cdot d w_0$. It is clear that $\|z\|\leq 1$ and 
$z a z^* \in A_+$. Now, using (\ref{2023.09.06.II}) at the first step and using the second part of (\ref{202309.05.1}) and the fact that 
$\| (c - \delta)_+\| \geq 1- \delta$ at the second step, we get
\begin{align*}
\|z b z^*\| = \frac{1}{\| w_0\|^2} \cdot \| (c- \delta)_+\| > \lambda (1- \delta) > 1- \ep.
\end{align*} 
This completes the proof. 
\end{proof}
If  a finite group action on a simple unital C*-algebra has the weak tracial Rokhlin property, 
it follows from Proposition 3.2 of \cite{FG17} that $\alpha$ is pointwise outer and, therefore, Lemma~\ref{L_0X12_ModPasn} can be applied.
\begin{ntn}\label{N_9408_StdNotation_CP}
Let $A$ be a unital C*-algebra and let $\af \colon G \to \Aut (A)$ be an action
of a finite group $G$ on $A$. 
For $g \in G$,
we let $u_g$ be the element of $C_{\mathrm{c}} (G, A, \af)$
which takes the value $1$ at $g$
and $0$ at the other elements of~$G$.
We use the same notation for its image in $C^* (G, A, \af)$.
Also, for each $g \in G$, we define the map 
$ E_g \colon  \CGAa \to A$ by 
$E_g (a) = a_g$,
where $a= \sum_{g \in G} a_g u_g$.
\end{ntn}
\begin{rmk}\label{rmk211005}
Let $G$ be an abelian finite group. Then:
\begin{enumerate}
\item
For every $\chi \in \widehat{G}$, we have
$\sum_{g\in G} \chi (g) = 
\begin{cases}
\card (G) & \chi=1\\
0 &         \chi\neq 1.
\end{cases}
$
\item
$\sum_{\chi \in \widehat{G}} \chi (g) = 
\begin{cases}
\card (G) & g=1\\
0 &         g \neq 1.
\end{cases}
$
\end{enumerate}
\end{rmk}
\begin{prp}\label{ProWTRP->WTAR}
Let $A$ be an infinite-dimensional simple unital C*-algebra and let $G$ be a finite abelian group.
Let $\alpha \colon G \to \Aut (A)$ be an action of  $G$ on $A$ which has the weak tracial Rokhlin property.
Then $\widehat{\alpha}$ is weakly tracially approximately representable.
\end{prp}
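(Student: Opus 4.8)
The plan is to construct, for a given finite set $F \subset C^*(G,A,\alpha)$, $\ep > 0$, and positive contraction $x \in C^*(G,A,\alpha)$ with $\|x\| = 1$, the element $c \in (C^*(G,A,\alpha)^{\widehat\alpha})_+$ and the contractions $s_\chi \in \overline{c\,C^*(G,A,\alpha)\,c}$ for $\chi \in \widehat G$ witnessing weak tracial approximate representability of $\widehat\alpha$. First I would reduce to the case that $F$ is contained in the standard dense $*$-subalgebra, i.e.\ $F = \{ a u_g : a \in F_0, \, g \in G \}$ for some finite $F_0 \subset A$; since $\widehat\alpha_\chi(a u_g) = \overline{\chi(g)}\, a u_g$, conditions on $F$ become conditions involving only $F_0 \subset A$ and the finitely many unitaries $u_g$. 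Next, since $\alpha$ has the weak tracial Rokhlin property on the simple $A$, by Proposition~3.2 of \cite{FG17} it is pointwise outer, hence (Proposition~1.4 of \cite{PP15}) pointwise spectrally nontrivial, so Lemma~\ref{L_0X12_ModPasn} applies to the given $x$: there is $z \in C^*(G,A,\alpha)$ with $\|z\| \le 1$, $zxz^* \in A_+$, and $\|zxz^*\| > 1 - \ep/2$. This lets me replace the ``test element'' $x$ in the crossed product by an honest positive element $x' = zxz^* \in A$ of norm close to $1$, to which the weak tracial Rokhlin property for $\alpha$ can be fed.

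Then I would invoke Lemma~\ref{invariant_contractions}, specifically condition (\ref{MainS3}): for suitably small $\ep' > 0$ and a finite set $F_0' \subset A$ containing $F_0$ (enlarged to control the commutators that will arise), there are positive contractions $f_g \in A$ for $g \in G$ with $\sum_{g} f_g = f \in A^\alpha$, $\|f\| = 1$, $\|f_g f_h\| < \ep'$ for $g \ne h$, $\alpha_g(f_h) = f_{gh}$ exactly, $\|a f_g - f_g a\| < \ep'$ for $a \in F_0'$, $(1 - f - \ep')_+ \precsim_A x'$, and $\|f x' f\| > 1 - \ep'$. Now define, following Izumi's formulas, $c = f$ (or $c = f^{1/2}\cdot$ something; I expect $c = f$ works since $f \in A^\alpha \subseteq C^*(G,A,\alpha)^{\widehat\alpha}$ as $A$ is fixed by $\widehat\alpha$) and
\[
s_\chi = \sum_{g \in G} \chi(g)\, f_g^{1/2}\, u_g \, f_g^{1/2}
\quad\text{or rather}\quad
s_\chi = \sum_{g \in G} \overline{\chi(g)}\, f_g\, u_g,
\]
the precise normalization chosen so that $s_1 \approx c$, $s_\chi^* \approx s_{\chi^{-1}}$, and $s_\chi s_\psi \approx c\, s_{\chi\psi}$ hold up to errors controlled by $\ep'$ and the almost-orthogonality $\|f_g f_h\| < \ep'$. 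One checks $s_\chi \in \overline{c\,C^*(G,A,\alpha)\,c}$ because each summand lies between powers of $f$. Condition (\ref{Week.Ap.Rep.Def.3}), $\|ca - ac\| < \ep$ for $a \in F \cup \{s_\chi\}$, follows from $\|f_g a u_h - a u_h f_g\| \le \|f_g a - a f_g\|\|u_h\| + \|a\|\|f_g - u_h f_g u_h^*\| = \|f_g a - a f_g\| + \|a\|\|f_g - \alpha_h(f_g)\|$ — wait, here $\alpha_h(f_g) = f_{hg} \ne f_g$ in general, so instead I should use that $u_h$ acts on $A$ by $\alpha_h$ and that $f = \sum f_g$ is genuinely $\alpha$-invariant, giving $c u_h = f u_h = u_h \alpha_h^{-1}(f) = u_h f = u_h c$ exactly, while $c$ commutes with $F_0$ up to $n\ep'$; commutation of $c$ with $s_\chi$ is then a short computation using $\alpha_g(f) = f$. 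For condition (\ref{Week.Ap.Rep.Def.4}), $\widehat\alpha_\chi(c a c) - s_\chi a s_\chi^*$: since $\widehat\alpha_\chi$ fixes $A$ pointwise and multiplies $u_g$ by $\overline{\chi(g)}$, one computes $s_\chi (a u_h) s_\chi^*$ and matches it against $\overline{\chi(h)}\, c (a u_h) c$ using $f_g f_{g'} \approx 0$ and $\alpha$-equivariance of the $f_g$; this is the most computation-heavy step. Condition (\ref{Week.Ap.Rep.Def.5}) becomes $\|\widehat\alpha_\chi(s_\psi) - s_\psi\| < \ep$, which is \emph{exact} (equals $0$) from the defining formula since $\widehat\alpha_\chi(s_\psi) = \sum_g \overline{\psi(g)}\, f_g\, \overline{\chi(g)} u_g$ — hmm, that gives $s_{\chi\psi}$, not $s_\psi$; I will need to absorb the character twist correctly, perhaps by noting $\widehat G$ is abelian so the relevant relation is $\widehat\alpha_\chi(s_\psi) = s_\psi$ only after choosing the right convention, or accept a version of (\ref{Week.Ap.Rep.Def.5}) that is compatible. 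Finally, conditions (\ref{Week.Ap.Rep.Def.6}) and (\ref{Week.Ap.Rep.Def.7}): $(1 - c - \ep)_+ = (1 - f - \ep)_+ \precsim_A (1-f-\ep')_+ \precsim_A x' = zxz^* \precsim_{C^*(G,A,\alpha)} x$ (the last Cuntz subequivalence is immediate from the definition, $y_n = z$ constant), and $\|c x c\| = \|f x f\| \ge \|f x' f\| - (\text{error from } x' \text{ vs } x)$ — here one needs $\|fxf\|$ close to $1$, which should come from $\|f x' f\| > 1 - \ep'$ together with $x' = zxz^*$ and $\|f z\|$, $\|z f\|$ estimates, possibly requiring $f$ to almost commute with $z$; if $z \notin$ the almost-commutant this is awkward, so I would instead put $z$-related elements into the finite set $F_0'$ fed to the weak tracial Rokhlin property at the earlier step, so that $f$ almost commutes with $z$ and hence $\|f x f\| \approx \|f z x z^* f\| \approx \|z f x f z^*\|$; getting a \emph{lower} bound this way needs the reverse inequality, so more care — I suspect the cleanest route is to apply the weak tracial Rokhlin property of $\alpha$ directly with test element $x'$ and test set including everything needed, then transport $\|fx'f\| > 1-\ep'$ back using $x' \precsim x$ only for the Cuntz condition and a separate small argument for the norm condition.

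I would organize the write-up as: (i) reduce $F$ and set up constants (many layers of $\delta$'s via Lemma~\ref{ArchyLemmaAp20}, Lemma~\ref{approxlemmaArch}, Lemma~\ref{L_0X19_Norm_fa_x_fa}); (ii) apply Lemma~\ref{L_0X12_ModPasn} to pass from $x$ to $x' \in A_+$; (iii) apply Lemma~\ref{invariant_contractions}(\ref{MainS3}) with a carefully enlarged finite set and small tolerance to get the equivariant almost-orthogonal $f_g$ with $f \in A^\alpha$; (iv) define $c$ and $s_\chi$ by the Izumi-type formulas; (v) verify (\ref{Week.Ap.Rep.Def.1})--(\ref{Week.Ap.Rep.Def.7}) one at a time, citing Remark~\ref{rmk211005} for the character-orthogonality sums and Lemma~\ref{PhiB.Lem_18_4} for the Cuntz manipulations.

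The main obstacle I anticipate is condition (\ref{Week.Ap.Rep.Def.4}) (the covariance relation $\widehat\alpha_g(cac) \approx s_g a s_g^*$): it is the only one genuinely coupling the dynamics $\widehat\alpha$, the multiplication in the crossed product, and the almost-orthogonality of the $f_g$ simultaneously, so the error bookkeeping is delicate — one must expand $s_\chi (a u_h) s_\chi^*$ into a double sum over $G$, use $f_g^{1/2} f_{g'}^{1/2} \approx 0$ to kill the off-diagonal terms (which requires Lemma~\ref{approxlemmaArch}-type control to pass from $\|f_g f_{g'}\|$ small to $\|f_g^{1/2} f_{g'}^{1/2}\|$ small), use $\alpha_h(f_g) = f_{hg}$ to relabel, and then recognize the result as $\overline{\chi(h)} f (a u_h) f$ up to the accumulated error; keeping all of this under $\ep$ forces the tolerances at step (i) to be chosen in the right order and is where a careless constant would break the proof. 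A secondary subtlety, noted above, is getting the \emph{lower} bound $\|cxc\| > 1 - \ep$ in the crossed product rather than just in $A$; I expect to handle it by feeding $z$ (from Lemma~\ref{L_0X12_ModPasn}) and its relevant functional calculus into the finite set used with the weak tracial Rokhlin property, so that $c = f$ almost commutes with $z$ and the norm estimate transports.
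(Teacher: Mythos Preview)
Your overall strategy matches the paper's: reduce $F$ to $F_0 \cup \{u_g\}$, apply Lemma~\ref{L_0X12_ModPasn} to replace $x$ by $x' = zxz^* \in A_+$, feed the Fourier coefficients $E_g(z), E_g(z^*)$ into the finite set so that $f$ almost commutes with $z$, invoke Lemma~\ref{invariant_contractions}(\ref{MainS3}) to get equivariant almost-orthogonal $f_g$ with $f \in A^\alpha$, and then verify (\ref{Week.Ap.Rep.Def.1})--(\ref{Week.Ap.Rep.Def.7}). The handling of conditions (\ref{Week.Ap.Rep.Def.6}) and (\ref{Week.Ap.Rep.Def.7}) you sketch is exactly what the paper does.

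There is, however, a genuine gap: your formula for $s_\chi$ is wrong. Both of your candidates $\sum_g \chi(g) f_g^{1/2} u_g f_g^{1/2}$ and $\sum_g \overline{\chi(g)} f_g u_g$ involve the canonical unitaries $u_g$, and you correctly noticed that this forces $\widehat\alpha_\chi(s_\psi) = s_{\chi\psi}$ rather than $s_\psi$, so (\ref{Week.Ap.Rep.Def.5}) fails. But the problem is not a convention issue to be ``absorbed''; it is structural. With your second formula one also has $s_1 = \sum_g f_g u_g$, which is nowhere near $c = f = \sum_g f_g$, so (\ref{Week.Ap.Rep.Def.1}) fails too. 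The correct Izumi-type formula, and the one the paper uses, is
\[
s_\tau = \sum_{g \in G} \overline{\tau(g)}\, f_g \in A,
\]
purely a linear combination of the Rokhlin contractions with character coefficients and \emph{no} $u_g$'s. Since $s_\tau \in A$ and $\widehat\alpha$ fixes $A$ pointwise, (\ref{Week.Ap.Rep.Def.5}) holds exactly; $s_1 = f$ gives (\ref{Week.Ap.Rep.Def.1}); and the covariance check (\ref{Week.Ap.Rep.Def.4}) on $u_h$ becomes $s_\tau u_h s_\tau^* = \sum_{g,g'} \overline{\tau(g)}\tau(g') f_g \alpha_h(f_{g'}) u_h = \sum_{g,g'} \overline{\tau(g)}\tau(g') f_g f_{hg'} u_h$, whose diagonal ($g = hg'$) dominates by almost-orthogonality and yields $\overline{\tau(h)}\sum_g f_g^2\, u_h \approx \overline{\tau(h)} f^2 u_h = \widehat\alpha_\tau(f u_h f)$. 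You had the direction of the Fourier transform reversed: the $u_g$'s appear in the formula for the Rokhlin contractions of $\widehat\alpha$ built from approximately representing elements of $\alpha$ (as in the proof of Proposition~\ref{ProWTAR->WTRP}), not here.

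One minor point once you fix the formula: because the $f_g$ are only approximately orthogonal, $\|s_\tau\|$ can exceed $1$ slightly (one gets $\|s_\tau s_\tau^*\| < \|f\|^2 + \ep/8 = 1 + \ep/8$), so the paper rescales by setting $c = f/(1+\ep/8)$ and $t_\tau = s_\tau/(1+\ep/8)$ to guarantee contractivity. Your proposal ``$c = f$'' would leave the $s_\tau$ non-contractive.
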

\begin{proof}
Let $\ep\in (0, 1)$, let $x \in C^*(G, A, \alpha)_+$ with $\| x \|=1$, and let $F \subset C^*(G, A, \alpha)$ be a finite set.
 We use Lemma~\ref{L_0X12_ModPasn} with $x$ in place of $a$ to choose  
 $z \in C^*(G, A, \alpha)$ such that 
 \begin{equation}
\label{Eq.b.2021.10.11}
zxz^* \in A_+,
\qquad
\| z\| \leq 1,
\qquad 
\mbox{ and }
\qquad
\|zxz^* \| >1 - \frac{\ep}{8}.
\end{equation}

 Without loss of generality,  
 $F= F_0 \cup \{u_g \colon g \in G\}$ for some finite subset $F_0 \subset A$ such that 
 $\| a \| \leq 1$ for all $a \in F_0$.
 Set 
\[
F_1 = F_0 \cup \left\{E_g (z), E_g (z^*)  \colon g \in G\right\},
\hspace*{.1em}
y = \frac{1}{\| zxz^*\|} \cdot zxz^*,
\hspace*{.1em}
\mbox{ and }
\hspace*{.1em}
\ep'=\frac{\ep}{48 (1+ \tfrac{\ep}{8})\card(G)^2}.
\] 
Applying Lemma~\ref{invariant_contractions} with $F_1$ and $y$ as given and  $\ep'$ in place of $\ep$,
we get positive contractions
$f_g \in A$ for $g \in G$ such that,
with $f = \sum_{g \in G} f_g$, the following hold:
\begin{enumerate}
\item\label{W.T.R.P.66.a}
 $\| f_g f_h \| < \ep'$
for all $g, h \in G$.
\item\label{W.T.R.P_11.b}
 $\| a f_g - f_g a \| < \ep'$
for all $g \in G$ and all $a \in F_1$.
\item\label{W.T.R.P_22.c}
 $ \alpha_{g} ( f_h ) = f_{g h}$ for all $g, h \in G$.
\item\label{W.T.R.P.33.d}
$(1 - f - \ep')_{+} \precsim_A y$.
\item\label{W.T.R.P.44.e}
$\| f y f \| > 1 - \ep'$.
\item\label{W.T.R.P.55.f}
 $f \in A^{\alpha}$ and $\| f \| =1$.
\setcounter{TmpEnumi}{\value{enumi}}
\end{enumerate}
Now for every $\tau \in \widehat{G}$, define
\[
s_{\tau} = \sum_{g \in G} \overline{\tau(g)} f_g,
\qquad
t_{\tau}= \frac{1}{1+\tfrac{\ep}{8}} \cdot s_{\tau},
\qquad
\mbox{and}
\qquad
c= \frac{1}{1+\tfrac{\ep}{8}} \cdot f.
\]
We claim that:
\begin{enumerate}
\setcounter{enumi}{\value{TmpEnumi}}
\item\label{A.00}
$ t_{\tau} \in \overline{cAc}$ for all $\tau \in \widehat{G}$.
\item\label{A.0}
$\| t_{\tau}\| \leq 1$ for all $\tau \in \widehat{G}$ and $c \in A^{\alpha} \setminus \{0\}$ with $\| c \|\leq 1$.
\item
\label{A.1}
$ t_1 = c$ and $t^*_{\tau}= t_{{\tau}^{-1}}$ for all $\tau \in \widehat{G}$.
\item
\label{A.3}
$\| t_{\tau} t_{\sigma} - c t_{\tau \sigma} \| < \ep$ for all $\tau, \sigma \in \widehat{G}$.
\item
\label{A.4}
$\| c t_{\tau}  -  t_{\tau} c \| < \ep$ for all $\tau \in \widehat{G}$.
\item
\label{A.5}
$\| c v  -  v c \| < \ep$ for all $v \in F$.
\item
\label{A.6}
$\widehat{\alpha}_{\tau} (t_{\sigma}) = t_{\sigma}$ for all $\tau, \sigma \in \widehat{G}$.
\item
\label{A.7}
$\| \widehat{\alpha}_{\tau} (cvc) -  t_{\tau} v t^*_{\tau} \| < \ep$ for all $\tau \in \widehat{G}$ and $v \in F$.
\item
\label{A.8}
$(1-c- \ep)_+ \precsim_{C^*(G, A, \alpha)} x$.
\item
\label{A.9}
$\| c x c\|> 1- \ep$.
\setcounter{TmpEnumi}{\value{enumi}}
\end{enumerate}
Since $f_g \in \overline{f A f}$, 
 it follows that   
 $s_{\tau} \in \overline{f A f}$ for all $\tau \in \widehat{G}$ and, therefore, (\ref{A.00}) is immediate.
 
 To prove (\ref{A.0}), it suffices to estimate $\| s_{\tau}\|$ for $\tau \in \widehat{G}$.  
Set $S = \{ (g, g) \colon g \in G \} \subseteq G^2$ and use (\ref{W.T.R.P.66.a}) at the third step to get
\begin{equation}
\label{E1.20201002}
\left\|
 f^2
 - 
\sum_{g \in G} f^2_g
\right\|
=
\left\|
\sum_{g, h \in G} f_g f_h -  \sum_{g \in G} f^2_g
\right\|
\leq
\left\|
\sum_{g, h \notin S} f_g f_h 
\right\|
\leq 
\card(G)^2\ep'.
\end{equation}
Now, using (\ref{E1.20201002}), (\ref{W.T.R.P.66.a}), and 
the fact that $|\tau(g)|=1$ for all $g\in G$ at the second step, we estimate
\begin{align*}
\| s_{\tau} s^{*}_{\tau} - f^2 \|
&\leq 
\left\| 
\sum_{g, h \notin S} \overline{\tau (g)} \tau (h) f_g f_h
\right\|
+
\left\| 
\sum_{g \in G} |\tau (g)|^{2}  f^2_g
-
f^2
\right\|
\\
&<
\card(G)^2\ep' + \card(G)^2 \ep'<\frac{\ep}{8}.
\end{align*} 
Therefore, using this at the second step and the second part of (\ref{W.T.R.P.55.f}) at the third step,
\[
\| s_{\tau} \|^2= \| s_{\tau} s^*_{\tau} \| < \| f \|^2 + \frac{\ep}{8}= 1 + \frac{\ep}{8}.
\]
This relation implies that
$\| s_{\tau} \| <  1+ \frac{\ep}{8}$.

Part (\ref{A.1}) is easy to check.

To prove (\ref{A.3}), 
we use (\ref{W.T.R.P.66.a}) at the second step to estimate
\begin{align*}
\| s_{\tau} s_{\sigma} - f s_{\tau \sigma}\|
&\leq 
\left\| 
\sum_{g, h \notin S} \overline{\tau (g)} \overline{\sigma (h)} f_g f_h
\right\|
+
\left\| 
\sum_{g \in G} \overline{\tau \sigma (g)}  f^2_g
-
\sum_{g, h \in G} \overline{\tau \sigma (g)}  f_h f_g
\right\|
\\
&<
\card(G)^2\ep' + \card(G)^2 \ep'= 2\card(G)^2\ep'.
\end{align*}
This relation implies that
$\| t_{\tau} t_{\sigma} - c t_{\tau \sigma}\|
<\ep$.

To prove (\ref{A.4}), we use (\ref{W.T.R.P.66.a}) at the second step to get
\begin{align*}
\| f s_{\tau}  -  s_{\tau} f\|
&\leq 
\left\| 
\sum_{g, h \notin S} \overline{\tau (g)}  f_h f_g
\right\|
+
\left\| 
\sum_{g \in G} \overline{\tau (g)} f^2_g
-
\sum_{g, h \in G} \overline{\tau(h)}    f_h f_g
\right\|
\\
&<
\card(G)^2\ep' + \card(G)^2 \ep'= 2\card(G)^2 \ep'.
\end{align*}
This relation implies that
$\| c t_{\tau}  -  t_{\tau} c\|<\ep$.

We prove (\ref{A.5}). Since $c \in A^{\alpha}$,  it follows that
$u_g c = c u_g$  for all $g \in G$.
For $a \in F_1$, we use (\ref{W.T.R.P_11.b}) at the second step to get
\begin{equation}
\label{Eq5.20201011}
\| a f - f a \| 
\leq 
\sum_{g \in G} \| af_g - f_g a  \|
< \card(G) \ep'.
\end{equation}
This relation implies that
$\| a c - c a \| 
< \frac{\card(G)}{1+\tfrac{\ep}{8}} \ep'<\ep$ and, therefore, 
 (\ref{A.5}) now follows.

To prove (\ref{A.6}), for all $\tau, \sigma \in \widehat{G}$, we have
\begin{align*}
\widehat{\alpha}_{\tau} (t_{\sigma})
=
\frac{1}{1+ \tfrac{\ep}{8}}\cdot \sum_{g \in G} \overline{\sigma(g)} \widehat{\alpha}_{\tau} (f_g) 
=
\frac{1}{1+ \tfrac{\ep}{8}} \sum_{g \in G} \overline{\sigma(g)} f_g
= 
t_{\sigma}.
\end{align*}
We prove (\ref{A.7}).  For all $a \in F_1$,  we use (\ref{W.T.R.P.66.a}) and (\ref{W.T.R.P_11.b}) at the second step to estimate
\begin{align*}
\left\| 
faf - \sum_{g \in G}   f_g a f_g 
\right\|
&\leq
\left\| 
 \sum_{g, h \in G}   f_g a f_h
 -
  \sum_{g, h \in G}   f_g f_h a
\right\|
\\\notag
&\qquad+
\left\| 
  \sum_{g, h \in S}   f_g f_h a
\right\|
+
\left\| 
  \sum_{g\in G}   f^2_g a
  -
  \sum_{g\in G}   f_g a f_g
\right\| 
\\\notag
&<
\card(G) \ep' + \card(G)^2 \ep' + \card(G) \ep'
<3 \card(G)^2 \ep'.
\end{align*}
Therefore, using this, (\ref{W.T.R.P_11.b}), and the fact that 
$\widehat{\alpha}_{\tau}(faf)=faf$ for $a \in F_1$
 at the second step,
\begin{align*}
\| s_{\tau} a s^*_{\tau}
-
\widehat{\alpha}_{\tau} (f a f)\|
&\leq
\left\| 
\sum_{g, h \in G} \overline{\tau (g)} \tau (h) f_g a f_h
-
\sum_{g, h \in G} \overline{\tau (g)} \tau (h) f_g  f_h a
\right\|
\\\notag
&\qquad+
\left\|
\sum_{g, h \notin S} \overline{\tau (g)} \tau (h) f_g  f_h a
\right\|
+
\left\| 
\sum_{g \in G} |\tau (g)|^{2}  f^2_g a
-
\sum_{g \in G}   f_g a f_g
\right\|
\\\notag
&\qquad+
\left\| 
\sum_{g \in G}   f_g a f_g
- f a f
\right\|
\\\notag
&<
\card(G)^2 \ep'+ \card(G)^2 \ep'+ \card(G) \ep'+ 3 \card(G)^2 \ep'
\\\notag
&\leq 6 \card(G)^2 \ep'.
\end{align*}
This relation implies that, for all $a \in F_1$,
\begin{equation}
\label{EQ8.20210.3}
\| t_{\tau} a t^*_{\tau}
-
\widehat{\alpha}_{\tau} (c a c)\|
<
\frac{6 \card(G)^2 \ep'}{(1+ \tfrac{\ep}{8})^2}
<
\ep.
\end{equation}
Now, we use  (\ref{W.T.R.P_22.c}) at the second step to get
 \begin{align}
 \label{Eq4.20201003}
s_{\tau} u_g s^*_{\tau} 
 = 
 \sum_{h, t \in G} \overline{\tau (h)} \tau (t) f_h \alpha_{g} (f_t) u_g
 =
 \sum_{h, t  \in G} \overline{\tau (h)} \tau (g^{-1}t) f_h f_{t} u_g,
 \end{align}
 and use (\ref{W.T.R.P.55.f}) to get
 \begin{equation}
 \label{Eq5.20201003}
 \widehat{\alpha}_{\tau} (f u_g f) 
 = 
 \overline{\tau(g)} f u_g f
 =
  \overline{\tau(g)} f^2 u_g.
 \end{equation}
Therefore, using (\ref{Eq4.20201003}) and (\ref{Eq5.20201003}) at the first step,  
and using  (\ref{W.T.R.P.66.a}) and (\ref{E1.20201002}) at the second step,
\begin{align*}
\| s_{\tau} u_g s^*_{\tau}
-
\widehat{\alpha}_{\tau} (f u_g f)\|
&\leq
\left\|
  s_{\tau} u_g s^*_{\tau} - \sum_{h, t  \in G} \overline{\tau (h)} \tau (g^{-1}t) f_h f_{t} u_g
  \right\|
  \\&\quad+
\left\| 
\sum_{h, t  \notin S} \overline{\tau (h)} \tau (g^{-1}t) f_h f_{t} u_g
\right\|
+
\left\|
\sum_{h\in G} \tau (g^{-1}) f^2_h u_g - \overline{\tau(g)} f^2 u_g
\right\|
\\\notag
&<
2\card(G)^2 \ep'+ \card(G) \ep'< 3 \card(G)^2 \ep'.
\end{align*}
This relation implies that
\begin{equation}
\label{Eq7.20201003}
\| t_{\tau} u_g t^*_{\tau}
-
\widehat{\alpha}_{\tau} (c u_g c)\|
<
\frac{3\card(G)^2 \ep'}{(1+ \tfrac{\ep}{8})^2}
<\ep.
\end{equation}
Part (\ref{A.7}) now follows from (\ref{EQ8.20210.3}) and (\ref{Eq7.20201003}).

To prove (\ref{A.8}), we use (\ref{W.T.R.P.55.f}) at the third step to estimate
\begin{align*}
\|(1 - c) - (1 - f -\ep')_+ \| 
&<
\| (1 - c) + (1-f) \| + \| (1-f) - (1 - f -\ep')_+ \|
\\&\leq 
\| c - f\| + \ep'
\leq
\left|
\frac{1}{1+ \tfrac{\ep}{8}} - 1 
\right|
\cdot \| f \|+\ep'
<
\frac{\ep}{2} + \frac{\ep}{2}=\ep.
\end{align*}
Using this and Lemma~\ref{PhiB.Lem_18_4_10}(\ref{Item_9420_LgSb_1_6}) at the first step,  and using (\ref{W.T.R.P.33.d}) at second step, we get
\[
(1 - c -\ep)_+ \precsim_A (1 - f -\ep')_+ \precsim_A y \sim_{A} zxz^*.
\]
Since the above relation also holds in $C^*(G, A, \alpha)$, it follows that
\[
(1 - c -\ep)_+ \precsim_{C^*(G, A, \alpha)}  zxz^* \precsim_{C^*(G, A, \alpha)} x.
\] 

To prove (\ref{A.9}), we use the fact that $\| c \|\leq \frac{1}{1+ \tfrac{\ep}{8}}$ at the second step,  and use the second part of (\ref{Eq.b.2021.10.11}) and
 (\ref{Eq5.20201011}) at the third step to estimate
\begin{align*}
\| cz x z^*c - z cxc z^* \|
&\leq
\| cz - zc \| \cdot \| xz^*c \| + \| zcx \|\cdot \|z^*c - cz^* \|
\\\notag
&<
\sum_{g \in G}
\left\| 
fE_{g} (z) - E_{g} (z) f
\right\|
\cdot \| xz^*c \| \cdot \frac{1}{1+\tfrac{\ep}{8}}
\\&\qquad+
\frac{1}{1+\tfrac{\ep}{8}}\cdot \| zcx \| \cdot
\sum_{g \in G} 
\left\| 
E_{g} (z^*)f  - f E_{g} (z^*)
\right\|
\\\notag
&<
 \card (G)^2 \ep' + \card (G)^2 \ep'
<
\frac{\ep}{2}. 
\end{align*}
This relation implies that
\begin{equation}
\label{Eq3.20201011}
\| zcxcz^* \| > \| czxz^*c  \| - \frac{\ep}{2}.
\end{equation}
Now, using the first part of  (\ref{Eq.b.2021.10.11}) at the first step, using (\ref{Eq3.20201011}) at the second step, 
 using (\ref{W.T.R.P.44.e}) at the fourth step, and  using the third part of  (\ref{Eq.b.2021.10.11}) at the fifth step, we get 
\begin{align*}
\| cxc\| \geq  \| z cxc z^*\| 
> \| czxz^*c  \| - \frac{\ep}{2}
&=
\frac{1}{(1+\tfrac{\ep}{8})^2} \cdot \| fzxz^*f  \| - \frac{\ep}{2}
\\&>
 \frac{1 - \ep'}{(1+\tfrac{\ep}{8})^2} \cdot \| z x z^*\| - \frac{\ep}{2}
\\& >
\frac{(1 - \tfrac{\ep}{8})^2}{(1+\tfrac{\ep}{8})^2} -  \frac{\ep}{2}
 >
 1 - \frac{\ep}{2} - \frac{\ep}{2} = 1 -\ep.
\end{align*}
This completes the proof.
\end{proof}
\begin{lem}
\label{Lem_Cunt_a_n}
Let $A$ be a unital C*-algebra and let $\ep>0$. 
Let $a \in A_+$ with $\|a \|\leq 1$ and let $n \in \N$. Then
$(1 - a^n - \ep )_+ \precsim_A \left(1 - a - \tfrac{\ep}{n} \right)_+$.
\end{lem}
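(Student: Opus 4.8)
The plan is to compare the positive elements $1 - a^n$ and $1 - a$ by a direct functional calculus argument inside the commutative C*-algebra $C^*(a, 1)$, which we may identify with $C(X)$ for $X = \spec(a) \subseteq [0, 1]$. Under this identification, $1 - a^n$ corresponds to the function $\ld \mapsto 1 - \ld^n$ and $1 - a$ to $\ld \mapsto 1 - \ld$, and Cuntz subequivalence of such functions in a commutative C*-algebra is detected by comparing their (open) supports together with a uniform control on where each function exceeds a given level. More precisely, I would use Lemma~\ref{PhiB.Lem_18_4}(\ref{PhiB.Lem_18_4_11}), which says that $(1-a^n-\ep)_+ \precsim_A \left(1 - a - \tfrac{\ep}{n}\right)_+$ follows once we produce, for every $\eta > 0$, a $\dt > 0$ with $\bigl((1-a^n-\ep)_+ - \eta\bigr)_+ \precsim_A \bigl(\left(1-a-\tfrac{\ep}{n}\right)_+ - \dt\bigr)_+$; in the commutative setting this reduces to a set inclusion of the corresponding open sets, which follows from an elementary pointwise inequality.

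The key pointwise estimate is this: for $\ld \in [0,1]$, whenever $1 - \ld^n \geq \ep$, i.e. $\ld^n \leq 1 - \ep$, one has $\ld \leq (1-\ep)^{1/n} \leq 1 - \tfrac{\ep}{n}$, the last step being the standard convexity inequality $(1-t)^{1/n} \leq 1 - \tfrac{t}{n}$ for $t \in [0,1]$ (equivalently, Bernoulli's inequality $(1 - \tfrac{t}{n})^n \geq 1 - t$). Consequently $1 - \ld \geq \tfrac{\ep}{n}$, so on the region where $1 - \ld^n \geq \ep$ we also have $1 - \ld - \tfrac{\ep}{n} \geq 0$, and moreover $1 - \ld^n = (1-\ld)(1 + \ld + \cdots + \ld^{n-1}) \leq n(1-\ld)$, hence $1 - \ld^n - \ep \leq n\bigl(1 - \ld - \tfrac{\ep}{n}\bigr)$. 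Thus the function $(1 - \ld^n - \ep)_+$ is pointwise dominated by $n$ times $\bigl(1 - \ld - \tfrac{\ep}{n}\bigr)_+$, and in particular the open support of the former is contained in that of the latter; in a commutative C*-algebra this gives $(1-a^n-\ep)_+ \precsim_A \left(1 - a - \tfrac{\ep}{n}\right)_+$ directly (a function is Cuntz subequivalent to another in $C(X)$ precisely when the support of the first is contained in the support of the second, which here follows from the pointwise domination, e.g. via Lemma~\ref{PhiB.Lem_18_4} applied to the cut-downs). Scaling by the positive constant $n$ does not affect the Cuntz class.

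I do not expect a serious obstacle here; the only point requiring a little care is the passage from the pointwise inequality to Cuntz subequivalence, where one should be careful that $\left(1 - a - \tfrac{\ep}{n}\right)_+$ is genuinely supported on the set $\{\ld : \ld < 1 - \tfrac{\ep}{n}\}$ and that this set contains $\{\ld : \ld^n < 1 - \ep\}$, the support of $(1 - a^n - \ep)_+$. The cleanest route is: for fixed $\eta \in (0, 1 - \ep)$ and $\ld$ with $1 - \ld^n - \ep \geq \eta$, the above gives $1 - \ld - \tfrac{\ep}{n} \geq \tfrac{\eta}{n}$, so choosing $\dt = \tfrac{\eta}{n}$ we get $\bigl((1-a^n-\ep)_+ - \eta\bigr)_+$ supported inside $\{1 - \ld - \tfrac{\ep}{n} \geq \dt\}$, whence it lies in the hereditary subalgebra generated by $\bigl(\left(1 - a - \tfrac{\ep}{n}\right)_+ - \dt\bigr)_+$ and is therefore Cuntz subequivalent to it; then invoke Lemma~\ref{PhiB.Lem_18_4}(\ref{PhiB.Lem_18_4_11}) to conclude.
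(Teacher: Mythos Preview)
Your proposal is correct and follows essentially the same approach as the paper: both arguments reduce the question to functional calculus in $C^*(1,a) \cong C(\spec(a))$ and then verify the set inclusion $\{\ld : 1 - \ld^n > \ep\} \subseteq \{\ld : 1 - \ld > \ep/n\}$ via the Bernoulli-type inequality $(1-\ep/n)^n \geq 1-\ep$. The paper's proof simply asserts this inclusion, while you supply the details and an additional pointwise domination estimate, but the argument is the same in substance.
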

\begin{proof}
We apply  functional calculus to $a$.
 So we have an isomorphism
 $\varphi \colon C\left( \spec (a) \right) \to  C^*(1, a)$.
 Then we can check  that
 \[
 \big\{ x \in \spec (a) \colon 1 - x^n - \ep > 0 \big\} \subseteq
 \big\{ x \in \spec (a) \colon  1 - x - \tfrac{\ep}{n}  > 0 \big\}.
 \]
This completes the proof.
 \end{proof}
\begin{prp}\label{ProWTAR->WTRP}
Let $A$ be an infinite-dimensional simple unital C*-algebra and let $G$ be a finite abelian group.
Let $\alpha \colon G \to \Aut (A)$ be an action of  $G$ on $A$ which is pointwise outer.
If $\alpha$ is weakly tracially approximately representable, then $\widehat{\alpha}$ has the weak tracial Rokhlin property.
\end{prp}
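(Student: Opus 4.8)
The plan is to run the construction of Proposition~\ref{ProWTRP->WTAR} in reverse. Given $\ep > 0$, a finite set $F \subseteq C^*(G, A, \alpha)$, and a positive element $x \in C^*(G, A, \alpha)$ with $\|x\| = 1$, I first apply Lemma~\ref{L_0X12_ModPasn} (legitimate since $\alpha$ is pointwise outer, hence pointwise spectrally nontrivial by Proposition~1.4 of~\cite{PP15}, so its dual action's situation is handled via the same device applied to $\widehat{\alpha}$ on $C^*(\widehat{G}, C^*(G, A, \alpha), \widehat{\alpha}) \cong \cK(\ell^2(G)) \otimes A$ — but more simply: $\widehat{\alpha}$ acts on $C^*(G, A, \alpha)$ and we want Rokhlin towers, so we first reduce $x$ via Lemma~\ref{L_0X12_ModPasn} applied to $\widehat{\alpha}$ itself to get $z$ with $zxz^* \in C^*(G, A, \alpha)^{\widehat{\alpha}} = A$ and $\|zxz^*\|$ close to $1$). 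After normalizing $y = \|zxz^*\|^{-1} zxz^* \in A_+$, I enlarge $F$ to include the Fourier coefficients $E_g(z), E_g(z^*)$ and apply weak tracial approximate representability of $\alpha$ with a suitably small $\ep'$, $y$ in place of $x$, and the enlarged finite set, obtaining $c \in (A^\alpha)_+$ and contractions $s_\chi \in \overline{cAc}$ for $\chi \in \widehat{G}$ (reindexing $G \mapsto \widehat{G}$, legitimate since $G$ is abelian).

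The second step is to build the Rokhlin positive contractions $f_\chi$ for the dual action from the $s_\chi$ by the inverse Fourier transform: set $f_\chi = \frac{1}{\card(G)} \sum_{g \in G} \chi(g) s_g u_g \in C^*(G, A, \alpha)$, or more precisely a symmetrized/normalized version thereof. Using Remark~\ref{rmk211005} one checks $\sum_{\chi} f_\chi \approx c \cdot (\text{something close to } 1)$ and that $\widehat{\alpha}_\chi$ permutes the $f_\psi$ correctly, i.e. $\widehat{\alpha}_\chi(f_\psi) \approx f_{\chi\psi}$, because $\widehat{\alpha}_\chi(u_g) = \overline{\chi(g)} u_g$ shifts the Fourier index. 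Approximate orthogonality $\|f_\chi f_\psi\|$ small for $\chi \neq \psi$ follows from the orthogonality relations of Remark~\ref{rmk211005}(1) together with the relations $s_g s_h \approx c s_{gh}$ and the commutation of $c$ with the $s_g$ and with $F_1$; the almost-commutation $\|a f_\chi - f_\chi a\|$ small for $a \in F$ follows from Condition~(\ref{Week.Ap.Rep.Def.3}) applied to the elements of $F_1$ and the fact that $u_g$ is (up to the action) the only noncommuting piece, handled as in the estimates (\ref{Eq4.20201003})–(\ref{Eq7.20201003}) of Proposition~\ref{ProWTRP->WTAR}. Condition~(\ref{Week.Ap.Rep.Def.4}), $\|\alpha_g(cac) - s_g a s_g^*\|$ small, is what controls the interaction of $f$ with the implementing unitaries $u_g$ and is essential here precisely where (\ref{W.T.R.P_22.c}) was used in the forward direction.

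The comparison conditions come next: $1 - f \precsim_{C^*(G,A,\alpha)} x$ is obtained from $(1 - c - \ep')_+ \precsim_A y \sim_A zxz^* \precsim_{C^*(G,A,\alpha)} x$, after bounding $\|(1 - f) - (1 - c)\|$ (using $f \approx c$ times a scalar close to $1$) and invoking Lemma~\ref{PhiB.Lem_18_4}(\ref{PhiB.Lem_18_4_10})(\ref{Item_9420_LgSb_1_6}); one may also need Lemma~\ref{lem210912} and Lemma~\ref{Lem_Cunt_a_n} to pass between $1 - f$, $(1 - f - \ep)_+$, and functions of $f$ (this is exactly why Lemma~\ref{Lem_Cunt_a_n} was just proved). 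The condition $\|fxf\| > 1 - \ep$ follows by running the estimate (\ref{Eq3.20201011})–(last display) of Proposition~\ref{ProWTRP->WTAR} backwards: $\|fxf\| \geq \|z^* f z \cdot \text{stuff}\|$ — more carefully, one shows $\|f x f\|$ is close to $\|c (zxz^*) c\| = \|c y c\| \|zxz^*\|$ which is close to $1$ by Condition~(\ref{Week.Ap.Rep.Def.7}) applied with $y$. Finally, Lemma~\ref{invariant_contractions} (in the direction (\ref{MainS2})$\Rightarrow$(\ref{MainS1}), or rather just its statement showing approximate orthogonality suffices) lets me conclude that $\widehat{\alpha}$ has the weak tracial Rokhlin property from the approximately-orthogonal tower $(f_\chi)$.

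The main obstacle I anticipate is the bookkeeping in Step~2: verifying that the inverse Fourier transform of the $s_g$'s genuinely lands inside $C^*(G,A,\alpha)$ as positive-enough contractions with the right commutation and covariance, since the $s_g$ are only \emph{approximately} multiplicative and self-adjoint-compatible (Conditions~(\ref{Week.Ap.Rep.Def.1})–(\ref{Week.Ap.Rep.Def.2})), so the $f_\chi$ will only be approximate positive contractions and I will need a perturbation argument (à la the proof of Lemma~\ref{invariant_contractions}, using Lemma~\ref{approxlemmaArch} and Lemma~2.5.12 of~\cite{LinBook}) to replace them by honest positive contractions while preserving all the estimates. Keeping track of how the single small parameter $\ep'$ must be chosen to absorb the $\card(G)$-dependent constants from all these steps simultaneously is the fiddly part, but it is entirely parallel to what was done in Proposition~\ref{ProWTRP->WTAR}.
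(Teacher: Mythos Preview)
Your overall strategy matches the paper's, but there are two points where your sketch diverges in ways worth flagging.

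First, Lemma~\ref{L_0X12_ModPasn} is applied to the \emph{original} action $\alpha$ on $A$, not to $\widehat{\alpha}$. The hypothesis is that $\alpha$ is pointwise outer (hence pointwise spectrally nontrivial), and the lemma then gives, for $x \in C^*(G,A,\alpha)_+$, an element $z \in C^*(G,A,\alpha)$ with $zxz^* \in A_+$. You have no a priori information that $\widehat{\alpha}$ is pointwise outer, so your detour through $\widehat{\alpha}$ is not justified; fortunately it is also unnecessary. Relatedly, the elements $s_g$ coming from weak tracial approximate representability are indexed by $g \in G$ (the group acting), not by $\widehat{G}$; your ``reindexing'' remark obscures this.

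Second, and more substantively: your candidate $f_\chi = \frac{1}{\card(G)}\sum_{g} \chi(g)\, s_g u_g$ is not positive, and you correctly anticipate needing a perturbation argument. The paper sidesteps this entirely by setting
\[
p_\tau = \frac{1}{\card(G)}\sum_{g\in G}\overline{\tau(g)}\,u_g s_g^*,
\qquad
f_\tau = c\,p_\tau^*\,p_\tau\,c,
\]
so that $f_\tau$ is automatically a positive contraction. The price is that now $f = \sum_\tau f_\tau$ is close to $c^4$ rather than to $c$, which is exactly why Lemma~\ref{Lem_Cunt_a_n} is needed for the comparison step $(1-f-\ep)_+ \precsim (1-c^4-\tfrac{\dt}{2})_+ \precsim (1-c-\tfrac{\dt}{8})_+ \precsim x$, and why Lemma~\ref{L_0X19_Norm_fa_x_fa} (with $f(\lambda)=\lambda^4$) is invoked at the outset to pass from $\|cyc\| > 1-\ep'$ to $\|c^4 y c^4\| > 1-\ep/8$. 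With this device the perturbation you worry about in your final paragraph disappears; the rest of your outline (approximate orthogonality via Remark~\ref{rmk211005}, covariance via $\widehat{\alpha}_\sigma(u_g) = \overline{\sigma(g)}u_g$, commutation via Conditions~(\ref{Week.Ap.Rep.Def.3}) and~(\ref{Week.Ap.Rep.Def.4}), and the appeal to Lemma~\ref{invariant_contractions} at the end) is correct and matches the paper.
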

\begin{proof}
Let $\ep\in (0, 1)$, let $x \in C^*(G, A, \alpha)_+$ with $\| x \|=1$, and let $F \subset C^*(G, A, \alpha)$ be a finite set.
 Without loss of generality,  
 $F= F_0 \cup \{u_g \colon g \in G\}$ for some finite subset $F_0 \subset A$ such that 
 $\| a \| \leq 1$ for all $a \in F_0$.
By Lemma~\ref{L_0X19_Norm_fa_x_fa}, we choose $\dt\in (0, \ep)$ such that:
\begin{enumerate}
\item\label{Cond.a.05.15.19}
If $a, b \in A_{+}$ satisfy
$\| a \| \leq 1$, $\| b \| \leq 1$, and
$\| a b a \| > 1 - \dt$,
then 
\[
\| a^4 b a^4 \| > 1 - \frac{\ep}{8}.
\]
\setcounter{TmpEnumi}{\value{enumi}}
\end{enumerate}
Since $\alpha$ is pointwise outer, it follows from Lemma~\ref{L_0X12_ModPasn} that there is an element
 $z \in C^*(G, A, \alpha) \setminus \{0\}$ such that:
\begin{equation}
\label{Eqa.2021.10.11}
zxz^* \in A_+,
\qquad
\| z\| \leq 1,
\qquad
\mbox{and}
\qquad
\|zxz^* \| >1 - \frac{\ep}{8}.
\end{equation}
Set
\[
F_0 = \Big\{E_g (a), E_g (a)^{*}  \colon a \in F\cup \{z, x\}, \ g \in G\Big\},
\]
\[
y= \frac{1}{\| z x z^*\|} \cdot z x z^*,
\quad
\mbox{ and } 
\quad
\ep'= \frac{\dt}{640\card (G)}.
\]
Using Definition~\ref{Week.Ap.Rep.Def} with $F_0 $, $y$, and $\ep'$ as given,
there are
 $c  \in (A^{\alpha})_+$ with $\| c \| \leq 1$ and 
 contractive elements  $s_g \in  \overline{c A c}$ 
 for $g \in G$ such that:
\begin{enumerate}
\setcounter{enumi}{\value{TmpEnumi}}
\item
\label{Week.Ap.Rep.Def.11}
$\| c a - a c \| < \ep'$ for all $a \in F_0 \cup \{s_g \colon g \in G\}$.
\item
\label{Week.Ap.Rep.Def.22}
$\| \alpha_g (c a c) - s_g a  s^*_g \|< \ep'$ for all $a \in F_0$ and all $g \in G$.
\item
\label{Week.Ap.Rep.Def.33}
$\| s_g s_h - c s_{gh}\| < \ep'$ for all $g, h \in G$.
\item
\label{Week.Ap.Rep.Def.33"}
$\| s_1 - c\|< \ep'$ and $\|s^*_g - s_{g^{-1}}  \|< \ep'$ for all $g \in G$.
\item
\label{Week.Ap.Rep.Def.44}
$\| \alpha_g (s_h) - s_h\| < \ep'$ for all $g, h \in G$.
\item
\label{Week.Ap.Rep.Def.55}
$(1 - c - \ep')_+ \precsim_A y$.
\item
\label{Week.Ap.Rep.Def.66}
$\| c y c\| > 1-\ep'$.
\setcounter{TmpEnumi}{\value{enumi}}
\end{enumerate}
We use (\ref{Week.Ap.Rep.Def.66}) and (\ref{Cond.a.05.15.19}) to get 
\begin{equation}
\label{Eq1_20201116}
\| c^4 y c^4 \| > 1 - \frac{\ep}{8},
\end{equation}
and use (\ref{Week.Ap.Rep.Def.44}) at the second step to get, for all $g, h \in G$,
\begin{equation}
\label{Eq2.20201105}
\| u_g s_h - s_h u_g\| 
\leq 
\| \alpha_{g} (s_h) - s_h \| \cdot \|u_g \|
<
\ep'.
\end{equation}
Using the fact that $s^*_{hg}=  s^*_{gh}$ at the first step, and using (\ref{Week.Ap.Rep.Def.33}) and  (\ref{Eq2.20201105})  at the second step, we estimate
\begin{align}
\label{Eq4_20201105}
&\|u_g s^*_g u_h s^*_h -  u_{g h} s^*_{g h} c\|
\\\notag 
&\hspace*{2em}\leq
\| u_g \| \cdot \| s^*_g u_h - u_h s^*_g \| \cdot \|s^*_h \|
+
\| u_{gh}\|\cdot \| s^*_g s^*_h -  s^*_{gh} c\|
<
\ep'+\ep'=2\ep'.
\end{align}
For every $\tau \in \widehat{G}$,  we define 
\[
p_{\tau} = \frac{1}{\card (G)} \sum_{g \in G} \overline{\tau(g)}  u_g s^*_g,
\qquad
f_{\tau} = c p^*_{\tau} p_{\tau} c, 
\qquad
\mbox{ and } 
\qquad
f=\sum_{\tau \in \widehat{G}} f_{\tau}.
\]
Now,  we claim that:
\begin{enumerate}
\setcounter{enumi}{\value{TmpEnumi}}
\item
\label{WTR_a}
$f_{\tau}$ for $\tau \in \widehat{G}$ are positive contractions.
\item
\label{WTR_b}
$\| f_\tau f_\sigma \| < \ep$
for all $\tau, \sigma \in \widehat{G}$ with $\tau \neq \sigma$.
\item
\label{WTR_c}
$\| a f_\tau - f_\tau a \| <\ep$
for all $\tau \in \widehat{G}$ and all $a \in F$.
\item
\label{WTR_d}
 $\| \alpha_{\tau} ( f_\sigma) - f_{\tau \sigma}\| < \ep$ for all $\tau, \sigma \in \widehat{G}$.
\item
\label{WTR_e}
$(1 - f - \ep)_{+} \precsim_{C^*(G, A, \alpha)} x$.
\item
\label{WTR_f}
$\| f x f \| > 1 - \ep$.
\setcounter{TmpEnumi}{\value{enumi}}
\end{enumerate}
Part (\ref{WTR_a}) is immediate.

We prove (\ref{WTR_b}). We use (\ref{Eq2.20201105}) at the last step to estimate
\begin{align}
\label{EQ.3.20200815}
\|p_{\tau} c - c p_{\tau} \|
&\leq
\frac{1}{\card(G)} \sum_{g \in G} |\tau(g)| \cdot \| u_g s^*_g c - c u_g s^*_g\|
\\\notag &\leq
\frac{1}{\card(G)} \sum_{g \in G}
\left(
\|u_g \| \cdot \| s^*_g c - c s^*_g\|
+
\| c \| \cdot \| u_g s^*_{g} - s^*_{g} u_g \|
\right)
\\\notag & <
2\ep'.
\end{align}
Using (\ref{EQ.3.20200815}), short computations show that
\begin{equation}
\label{EQ1_20201107}
\| p_{\tau} c^2 - c^2 p_{\tau} \|< 4\ep',
\quad
\| p^*_{\tau} p_{\tau} c  -  c  p^*_{\tau} p_{\tau} \|<4\ep',
\quad
\mbox{and}
\quad
\| p_{\tau} c^3  -  c^3  p_{\tau} \|<6\ep'.
\end{equation}
 We use  (\ref{Eq2.20201105}) at the last step  to get
\begin{align}
\label{EQ.1.20201105}
\| p^*_{\tau} - p_{\tau} \|
&\leq
\frac{1}{\card (G)} \sum_{g \in G}
|\tau(g)|\cdot
\| 
s_{g^{-1}} u_{g}  - u_{g} s^*_{g}
\|
\\\notag
&\leq
\frac{1}{\card (G)} \sum_{g \in G}
\left(
\| 
s_{g^{-1}} u_{g}  -  s^*_{g} u_{g}
\|
+
\| 
s^*_{g} u_{g} - u_{g} s^*_{g}
\|
\right)
\\\notag &<
\ep'+\ep'=
2\ep'.
\end{align}
A short computation together with Remark~\ref{rmk211005} shows that
\begin{equation}
\label{EQ13_20201106}
 \sum_{g, h \in G} \overline{\sigma(g)} \overline{\tau(h)} u_{g h} s^*_{g h} 
 =
 \begin{cases}
0                  &  \sigma \neq \tau\\
\card (G) p_{\tau} & \sigma = \tau.
 \end{cases}
\end{equation}
Then,  by (\ref{Eq4_20201105}),
\begin{align}
\label{EQ.2.20200815}
&\left\|
p_{\sigma} p_{\tau}
-
\frac{1}{\card(G)^2} \sum_{g, h \in G} \overline{\sigma(g) \tau(h)} u_{g h} s^*_{g h} c
\right\|
\\
\notag &\hspace*{5em}\leq
\frac{1}{\card(G)^2}
\sum_{g, h \in G} | \sigma(g) \tau(h)|
\cdot 
\| u_g s^*_g u_h s^*_h - u_{g h} s^*_{g h} c\|
<
2\ep'.
\end{align}
Using (\ref{EQ13_20201106}) and (\ref{EQ.2.20200815}), we get, for all $\tau, \sigma \in \widehat{G}$ with $\tau\neq \sigma$,
\begin{equation}\label{Eq1.2021.10.08}
\|p_{\tau} p_{\sigma} \|< 2\ep',
\end{equation}
and, for all $\tau, \sigma \in \widehat{G}$,
\begin{equation}\label{Eq2.2021.10.08}
\| p^2_{\tau} -  p_{\tau} c\| < 2\ep'.
\end{equation}
Therefore, for all $\tau, \sigma \in \widehat{G}$ with $\tau \neq \sigma$, using the first part of (\ref{EQ1_20201107}),  (\ref{EQ.1.20201105}), and (\ref{Eq1.2021.10.08}) at the last step,
\begin{align}\label{Eq.35.2021.10.08}
\| f_{\tau} f_{\sigma}\| = \| c p^*_{\tau} p_{\tau} c^2 p^*_{\sigma} p_{\sigma} c\|
&\leq 
\|c p^*_{\tau} \| \cdot \|p_{\tau} c^2 - c^2 p_{\tau}  \| \cdot \| p^*_{\sigma} p_{\sigma} c\|
\\\notag&\quad+
\| c p^*_{\tau} c^2 p_{\tau} \| \cdot \|p^*_{\sigma} - p_{\sigma} \| \cdot \| p_{\sigma} c\|
\\\notag&\quad+
\| c p^*_{\tau} c^2\| \cdot \|p_{\tau} p_{\sigma} \| \cdot \| p_{\sigma} c\|
\\\notag &<
4\ep' + 2\ep' + 2\ep'= 8\ep'. 
\end{align}

We prove (\ref{WTR_c}).
 For all $v \in F\cup \{z, x\}$,  by (\ref{Week.Ap.Rep.Def.11}),  we have
\begin{equation}\label{EQ45_20201107}
\left\| 
c v - vc
\right\|
\leq
\sum_{g\in G} \| c E_{g} (v) - E_{g} (v) c  \|
< \card(G) \ep'.
\end{equation}
For all $a\in F_0$, we use (\ref{Eq2.20201105}),  (\ref{Week.Ap.Rep.Def.11}),  
(\ref{Week.Ap.Rep.Def.22}), (\ref{Week.Ap.Rep.Def.33}), and (\ref{Week.Ap.Rep.Def.44}) at the last step to get
\begin{align*}
\| cac (u^*_g s_g) -(u^*_g s_g) cac \|
&\leq
\| cac \| \cdot \| u^*_g s_g - s_g u^*_g \| 
\\&\quad +
\|u^*_g \| \cdot \| \alpha_{g} (cac) - s_g a s^*_g \| \cdot \| \alpha_{g} ( s_g)\|
\\&\quad+
\|u^*_g s_g a s^*_g\| \cdot \| \alpha_{g} ( s_g) - s_g \|
\\&\quad+
\|u^*_g s_g a \| \cdot \| s^*_g - s_{g^{-1}} \| \cdot \| s_g\|
+
\|u^*_g s_g a \| \cdot \|s_{g^{-1}} s_g - c s_1 \|
\\&\quad +
\|u^*_g s_g a c \| \cdot \|s_1 - c \|
 +
\|u^*_g s_g \| \cdot \|ac - ca \| \cdot \| c\|
\\&<
7\ep'.
\end{align*}
This relation implies that
$\| cac p_{\tau} - p_{\tau} cac\|< 7\ep'$
for all $a\in F_0$.
Using this and the fact that $\| p_{\tau}\| \leq 1$, we get 
\begin{equation}\label{EQ3.2021.10.08}
\| cac p^*_{\tau} p_{\tau} - p^*_{\tau} p_{\tau} cac  \|
\leq
\| cac p^*_{\tau} -  p^*_{\tau} cac\| \cdot \| p_{\tau}\|
+
\| p^*_{\tau} \| \cdot \| cac p_{\tau} -   p_{\tau} cac \| < 14\ep'.
\end{equation}
For all $h \in G$ and all $\tau \in \widehat{G}$,  we use (\ref{Eq2.20201105}) to get
\begin{align}\label{EQ.6.20200815}
\| u_h p_{\tau}  -  p_{\tau} u_h \|
\leq
\frac{1}{\card(G)} \sum_{g \in G} |\tau(g)| \cdot \|  u_h u_g s^*_g -  u_g s^*_g u_h\|
< \ep'.
\end{align}
 For all $a \in F_0$,  we use (\ref{EQ3.2021.10.08})  and 
 the second part of (\ref{EQ1_20201107}) at the last step to get
\begin{align}\label{EQ.d.2021.10.08}
\| a f_{\tau} - f_{\tau} a \|
&\leq
\| a c - c a\| \cdot \| p^*_{\tau} p_{\tau} c \| 
+
\| c a\| \cdot \|  p^*_{\tau} p_{\tau} c -  c p^*_{\tau} p_{\tau}  \| 
\\\notag&\quad+
 \| cac p^*_{\tau} p_{\tau}  -   p^*_{\tau} p_{\tau} cac  \| 
+
\|  p^*_{\tau} p_{\tau} c -  c p^*_{\tau} p_{\tau}  \| \cdot \| ac\|
\\\notag&\quad+
\| c p^*_{\tau} p_{\tau} \| \cdot \| ac  -  ca \| 
\\\notag&<
 \ep' + 4\ep'+
14\ep' + 4 \ep'
+\ep'
=24\ep'.
\end{align}
For all $h \in G$, by (\ref{EQ.6.20200815}), we have
\begin{equation}\label{EQ.d.2021.10.08'}
\| u_h f_{\tau} - f_{\tau} u_h\| \leq
\| c \| \cdot \| u_h p^*_{\tau} -  p^*_{\tau} u_h\| \cdot \|p_{\tau} c \|
+
\|c p^*_{\tau} \| \cdot \| p_{\tau}  u_h -   u_h p_{\tau} \| \cdot \| c\| < 2\ep'.
\end{equation}
Now putting (\ref{EQ.d.2021.10.08}) and (\ref{EQ.d.2021.10.08'}) together, we get, for all $v \in F \cup \{x, z\}$,
\begin{equation}\label{Eq30.2021.10.09}
\| v f_{\tau} -  v f_{\tau}\| < 26\card (G)\ep'<\ep.
\end{equation}

We prove (\ref{WTR_d}).
We use (\ref{Eq2.20201105}) at the last step to estimate
\begin{align}
\label{Eq10_20201106}
\| \widehat{\alpha}_{\sigma} (u_g s^*_g ) - \overline{\sigma(g)} u_g  s^*_g   \|
&\leq
\|\widehat{\alpha}_{\sigma} \| \cdot \| u_g s^*_g - s^*_g u_g  \|
+
| \sigma(g)| \cdot \| s^*_g u_g  - u_g  s^*_g \|
\\\notag
&<
\ep'+\ep'=2\ep'.
\end{align}
Now, using (\ref{Eq10_20201106}) at the last step, we get
\begin{align}\label{Eq10_20201106.b}
\| \widehat{\alpha}_{\sigma} (p_{\tau})
-
p_{\sigma \tau}
\|
=
\frac{1}{\card(G)} \sum_{g \in G} |\tau(g)| \cdot \| \widehat{\alpha}_{\sigma} (u_g s^*_g ) - \sigma(g) u_g  s^*_g  \|
<
2\ep'.
\end{align}
Therefore, using (\ref{Eq10_20201106.b}) at the last step,
\begin{align*}
\| \widehat{\alpha}_{\tau} ( f_\sigma) - f_{\tau \sigma} \| 
&\leq
\|c \| \cdot \| \widehat{\alpha}_{\tau}  ( p^*_\sigma  ) -  p^*_{\tau \sigma} \| \cdot \| \widehat{\alpha}_{\tau}  ( p_\sigma  ) c\|
\\&\quad+
\|c p^*_{\tau \sigma}\| \cdot \| \widehat{\alpha}_{\tau}  ( p_\sigma  ) -  p_{\tau \sigma} \| \cdot \| c\|
\\&<
2 \ep' + 2 \ep'< \ep.
\end{align*}

We prove (\ref{WTR_e}). 
 We use Remark~\ref{rmk211005} at the first step
 and use (\ref{Week.Ap.Rep.Def.33"}) at the second step to get
\begin{equation}\label{EqK.2021.10.10}
\left\|
\sum_{\tau \in \widehat{G}} p_{\tau} - c
\right\|
=
\|u_1 s^*_1 - c \|
< \ep'.
\end{equation}
 Using (\ref{EQ.1.20201105}), (\ref{Eq2.2021.10.08}), and (\ref{EqK.2021.10.10}) at second step,
 we get
\begin{align*}
\|f - c^4 \| 
&\leq
\sum_{\tau \in \widehat{G}} \|c p_{\tau} \| \cdot \|   p^*_{\tau} - p_{\tau} \| \cdot \|c \|
+
\sum_{\tau \in \widehat{G}} \|c \| \cdot \| p^2_{\tau}   - p_{\tau} c\| \cdot \|c\| 
\\&\qquad+
\|c \|\cdot
\left\|
\sum_{\tau \in \widehat{G}}  p_{\tau} 
-
c
\right\|
\cdot \| c^2\|
\\&<
2\card(G) \ep'+ 2\card(G) \ep' + \ep' \leq 5\card(G) \ep'.
\end{align*}
Using this at the second step, we get
\begin{equation}
\label{Eq2_20201107}
\|(1 - f) - (1 -c^4) \| = \| f - c^4 \|<5\card(G) \ep'< \frac{\dt}{2}.
\end{equation}
Using (\ref{Eq2_20201107}) and Lemma~\ref{PhiB.Lem_18_4_10}(\ref{Item_9420_LgSb_1_6}) at the second step,   using Lemma~\ref{Lem_Cunt_a_n} at the third step,  
and using (\ref{Week.Ap.Rep.Def.55}) at fifth step, we get 
\begin{align*}
(1 - f - \ep)_+ 
\precsim_{C^*(G, A, \alpha)}
\left(1 - f - (\tfrac{\dt}{2} +\tfrac{\dt}{2})\right)_+ 
&\precsim_{C^*(G, A, \alpha)} 
\left(1 -c^4 - \tfrac{\dt}{2}) \right)_+ 
\\&\precsim_{C^*(G, A, \alpha)}
 \left(1 -c - \tfrac{\dt}{8}\right)_+
 \\&\precsim_{C^*(G, A, \alpha)}
 (1 -c - \ep')_+
\\&\precsim_{C^*(G, A, \alpha)} y
\precsim_{C^*(G, A, \alpha)}
x.
\end{align*}

To prove (\ref{WTR_f}),  we use the third part of (\ref{EQ1_20201107}),   (\ref{EQ.1.20201105}),  (\ref{Eq2.2021.10.08}), and  (\ref{EQ.3.20200815})  at the last step to estimate
\begin{align}\label{EQ.b.20211004}
&\|c p_{\tau} p^*_{\tau} c^2 p_{\tau} p^*_{\tau} c - c^4 p_{\tau} c^3  \|
\\\notag& \hspace*{4em} \leq
\|c p_{\tau} \| \cdot
\|  p^*_{\tau}  -  p_{\tau}\|
\cdot \| c^2 p_{\tau} p^*_{\tau} c\|
 +
\|c \| \cdot
\|
 p^2_{\tau}   -   p_{\tau} c
\|
\cdot \| c^2 p_{\tau} p^*_{\tau} c\|
\\\notag
&\hspace*{6em} +
\|c \| \cdot
\|
  p_{\tau} c^3  -  c^3  p_{\tau} 
\|
\cdot \| p_{\tau} p^*_{\tau} c\| 
+
\| c^4 \| \cdot
\|
  p^2_{\tau}  -    p_{\tau} c 
\|
\cdot \| p^*_{\tau} c \|
\\\notag
&\hspace*{6em} +
\|c^4   \| \cdot 
\|
 p^2_{\tau}   -   p_{\tau} c 
\|
\cdot
\| p^*_{\tau} c \|
+
\| c^4  p_{\tau} \| \cdot
\|
 c  p^*_{\tau}  -   p^*_{\tau}  c
\|
\cdot \| c\|
\\\notag
&\hspace*{6em} +
\| c^4  p_{\tau} \| \cdot
\|
 p^*_{\tau} -  p_{\tau}
\|
\cdot 
\|c^2 \|
+
\|c^4 \| \cdot 
\| p^2_{\tau} - p_{\tau} c\| 
\cdot \| c^2\|
\\\notag &\hspace*{4em} <
2\ep' + 2 \ep' + 6\ep'+2\ep'+ 2\ep'+ 2\ep'+ 2\ep'+ 2\ep'
=
20\ep'. 
\end{align}
Using (\ref{EQ45_20201107}), (\ref{EqK.2021.10.10}), and (\ref{EQ.b.20211004})  at the last step,  we get
\begin{align*}
\left\|
 \sum_{\tau \in \widehat{G}} f^2_{\tau} x - c^4 x c^4
 \right\|
 &\leq
 \left\|
 \sum_{\tau \in \widehat{G}} c p_{\tau} p^*_{\tau} c^2 p_{\tau} p^*_{\tau} c x
  -
 \sum_{\tau \in \widehat{G}}  c^4 p_{\tau} c^3 x
 \right\|
 \\&\quad+
\| c^4\| \cdot
  \left\| \sum_{\tau \in \widehat{G}}  p_{\tau} - c \right\| 
\cdot \|c^3 x\|
 +
 \| c^8 x - c^4 x c^4\|
 \\&<
20 \card(G) \ep'+ \ep' + 4 \card(G) \ep'\leq 25 \card(G) \ep'.
\end{align*}
Now, using this, (\ref{Eq30.2021.10.09}), and (\ref{Eq.35.2021.10.08}) at the second step,  we get
\begin{align}\label{EQ.a.2021.10.05}
 \| f x f - c^4 x c^4\|
&\leq
 \left\|
  \sum_{\tau, \sigma \in \widehat{G}} f_{\tau} x f_{\sigma}  
 - 
 \sum_{\tau, \sigma \in \widehat{G}} f_{\tau} f_{\sigma} x 
 \right\|
+
\left\|
  \sum_{\tau\neq \sigma } f_{\tau} f_{\sigma} x   
 \right\|
 \\ \notag &\quad+
\left\|
 \sum_{\tau, \sigma \in \widehat{G}} f^2_{\tau} x - c^4 x c^4
 \right\|
\\ \notag &<
26 \card(G)^3 \ep'
+
8\card(G)^2 \ep'
+
25 \card(G) \ep'< \frac{\ep}{4}.
\end{align}
We further use (\ref{EQ45_20201107}) and the second part of  (\ref{Eqa.2021.10.11}) to get
\begin{align}
\label{EQ12021.10.13}
\|z c^4 x c^4 z^* - c^4 z x z^* c^4\|
&<
\|z c^4 -  c^4 z \| \cdot \| x c^4 z^* \|
+
\|c^4 z x \| \cdot \|c^4 z^*  - z^* c^4 \|
\\\notag
&<
4 \card (G) \ep' + 4 \card (G) \ep'< \frac{\ep}{8}.
\end{align}
Now, using (\ref{EQ.a.2021.10.05}) at the first step, using the second part of (\ref{Eqa.2021.10.11}) at the second step, 
using (\ref{EQ12021.10.13}) at the third step, 
 using (\ref{Eq1_20201116}) at the fourth step,  and using the third part of  (\ref{Eqa.2021.10.11}) at the fifth step,  we get
\begin{align*}
\| f x f\| \geq \|c^4 x c^4 \| - \frac{\ep}{8}
\geq
\|z c^4 x c^4 z^* \| - \frac{\ep}{8}
&>
\| c^4 z x z^* c^4  \| - \frac{\ep}{8} -\frac{\ep}{8}
\\&>
\| z x z^* \| \cdot \left(1 - \frac{\ep}{8}\right) - \frac{\ep}{4}
\\&>
\left(1 - \frac{\ep}{8}\right)^2  - \frac{\ep}{4}
> 1-\ep.
\end{align*}
This completes the proof of the claim and the result follows from the claim and Lemma~\ref{invariant_contractions}.
\end{proof}
Now, putting Proposition~\ref{ProWTRP->WTAR} and Proposition~\ref{ProWTAR->WTRP} together, we get the following theorem.
\begin{thm}
\label{MainThmCoaction}
Let $A$ be an infinite-dimensional simple unital C*-algebra and let $G$ be a finite abelian group.
Let $\alpha \colon G \to \Aut (A)$ be an action of  $G$ on $A$ which is pointwise outer.
Then:
\begin{enumerate}
\item \label{MainThmCoaction1}
$\alpha$ has the weak tracial Rokhlin property if and only if $\widehat{\alpha}$ is weakly tracially approximately representable.
\item \label{MainThmCoaction2}
$\alpha$ is weakly tracially approximately representable if and only if $\widehat{\alpha}$ has the weak tracial Rokhlin property.
\end{enumerate}
\end{thm}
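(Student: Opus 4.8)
The plan is to obtain \Thm{MainThmCoaction} as an essentially formal consequence of the two propositions already in hand, \Prp{ProWTRP->WTAR} and \Prp{ProWTAR->WTRP}, together with Pontryagin--Takai duality. Recall that the hypotheses are that $A$ is an infinite-dimensional simple unital C*-algebra, $G$ is a finite abelian group, and $\af\colon G\to\Aut(A)$ is pointwise outer. The first ingredient is that under these hypotheses the crossed product $\CGAa$ is again an infinite-dimensional simple unital C*-algebra: simplicity follows from pointwise outerness (this is the standard result for outer actions of finite groups on simple C*-algebras, e.g.\ via Kishimoto-type arguments), unitality is clear since $G$ is finite and $A$ is unital, and infinite-dimensionality is immediate because $\CGAa$ contains $A$ unitally. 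The second ingredient is the Takai duality isomorphism $\CGAa$-equivariant statement: the double dual action $\widehat{\widehat{\af}}$ of $\widehat{\widehat G}\cong G$ on $C^*(\widehat G,\CGAa,\widehat\af)$ is conjugate, via an isomorphism $\Phi$, to the action $\af\otimes\Ad(\ld)$ on $A\otimes\cK(\ell^2(G))$; since $G$ is finite, $\cK(\ell^2(G))=M_{|G|}$, and $\af\otimes\Ad(\ld)$ is again pointwise outer on the simple unital algebra $A\otimes M_{|G|}$.

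The main work, then, is to record that the two classes of actions appearing in the statement --- ``has the weak tracial Rokhlin property'' and ``is weakly tracially approximately representable'' --- are each invariant under conjugacy by isomorphisms and are each unaffected by tensoring with $(M_n,\Ad(\ld))$ for the particular $\ld$ arising from Takai duality. The conjugacy-invariance is trivial from the definitions (\Def{W_T_R_P_def} and \Def{Week.Ap.Rep.Def}), since an isomorphism transports positive contractions, approximate commutation estimates, and Cuntz subequivalence. The stability under $\otimes(M_n,\Ad(\ld))$ is the step that needs a short argument: given Rokhlin-type or representability data $(f_g)$ or $(c,s_g)$ for $\af$ witnessing a finite set $F\subseteq A\otimes M_n$ (which one may enlarge to be of the form $F_0\otimes M_n$) and a positive contraction $x$, one uses a system of matrix units $(e_{ij})$ for $M_n$ carried correctly by $\Ad(\ld)$ to build the corresponding data for $\af\otimes\Ad(\ld)$; the Cuntz-comparison conditions (\ref{Def.w.t.r.p.c}), (\ref{Def.w.t.r.p.d}) and their analogues transfer because $\precsim$ and the norm estimates on $fxf$ are stable under the ambient matrix amplification. (Alternatively, and more cheaply, one can avoid matrix units entirely by noting that for a pointwise outer action the relevant ``weak tracial'' approximate data can be produced inside a unital corner and then amplified; but the matrix-unit computation is routine and self-contained.)

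With these two invariance facts in place, the theorem follows by a now-standard bootstrap. For part (\ref{MainThmCoaction1}): if $\af$ has the weak tracial Rokhlin property then $\widehat\af$ is weakly tracially approximately representable by \Prp{ProWTRP->WTAR}; conversely, if $\widehat\af$ is weakly tracially approximately representable, apply \Prp{ProWTAR->WTRP} to the action $\widehat\af$ of $\widehat G$ on the infinite-dimensional simple unital C*-algebra $\CGAa$ --- noting that $\widehat\af$ is pointwise outer, which one gets from the fact that $\af$ itself, having the weak tracial Rokhlin property, is pointwise outer (as remarked in the paper via \cite{FG17}), hence $\CGAa$ is simple and by a second application of outerness/simplicity considerations $\widehat\af$ is outer --- to conclude that $\widehat{\widehat\af}$ has the weak tracial Rokhlin property; then transport along $\Phi$ and strip off the $(M_{|G|},\Ad(\ld))$ tensor factor to recover that $\af$ has the weak tracial Rokhlin property. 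Part (\ref{MainThmCoaction2}) is obtained by the symmetric argument, using \Prp{ProWTAR->WTRP} in the forward direction and \Prp{ProWTRP->WTAR} applied to $\widehat\af$ in the reverse direction, again closing the loop with Takai duality.

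The step I expect to be the genuine obstacle is not any of the above formal manipulations but rather making sure the pointwise-outerness hypothesis propagates correctly to $\widehat\af$ so that \Prp{ProWTAR->WTRP} and \Prp{ProWTRP->WTAR} are actually applicable at the second stage: the propositions are stated for pointwise outer actions, and one must check that whenever the relevant ``weak tracial'' property holds for $\af$ (or for $\widehat\af$), the dual action is pointwise outer. For the weak tracial Rokhlin property this is already cited in the paper; for weak tracial approximate representability one should verify, presumably by a direct argument with the data in \Def{Week.Ap.Rep.Def} (or by invoking that such actions are in particular weakly tracially strictly approximately inner and that their crossed products are simple), that $\widehat\af$ is pointwise outer --- equivalently, that no nontrivial spectral subspace can be all of the crossed product. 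Once that point is nailed down, the theorem assembles immediately from the two propositions and Takai duality.
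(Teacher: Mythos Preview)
Your approach is correct and is exactly what the paper's one-line proof (``putting \Prp{ProWTRP->WTAR} and \Prp{ProWTAR->WTRP} together'') is sweeping under the rug: the forward implications are the two propositions verbatim, and the reverse implications come from applying the propositions to $\widehat{\af}$ and then undoing Takai duality together with the stability of both properties under tensoring with $(M_{|G|},\Ad\rho)$.

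Two small points. First, there is a circularity in your justification of the outerness of $\widehat{\af}$ for part~(\ref{MainThmCoaction1}) backward: you invoke that $\af$ has the weak tracial Rokhlin property, which is precisely what you are trying to prove. You do not need this --- pointwise outerness of $\af$ is a standing hypothesis of the theorem, and from it alone you get that $\widehat{\af}$ is pointwise outer: $\af$ outer gives $\CGAa$ simple, Takai duality gives $C^*(\widehat G,\CGAa,\widehat{\af})\cong A\otimes M_{|G|}$ simple, and a finite abelian group action on a simple unital C*-algebra with simple crossed product is pointwise outer. This disposes of the ``genuine obstacle'' you flagged at the end, uniformly for both parts, without ever needing to extract outerness from the weak tracial hypotheses themselves. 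Second, the matrix-stability step you outline is routine (and for the weak tracial Rokhlin property is already in the literature you cite); your sketch with matrix units is adequate.
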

It is an important question whether there exists a finite group action 
which is weakly tracially approximately representable but not tracially approximately representable.
To obtain such examples, it suffices to find an action of a finite abelian group which
has the weak tracial Rokhlin property but not the tracial Rokhlin property by considering the duality theorems for tracial Rokhlin
actions and weak tracial Rokhlin actions (Theorem~3.11 of \cite{Ph11} and Theorem~\ref{MainThmCoaction}).
One such example was constructed in Example 5.10 of \cite{HO13}.
Furthermore, by  tensoring this action with  the trivial action of $G$ on $A$ and using Theorem~\ref{MainThmCoaction}, we can produce many more examples of group actions which  are weakly tracially approximately representable but not tracially approximately representable. 

Here is an example of a finite non-abelian group action on a unital simple 
$\mathcal{Z}$-stable
C*-algebra which simultaneously  is weakly tracially approximately representable and has the weak tracial Rokhlin property. 
\begin{exa}\label{Exam1}
Let $A$ be the $2^\infty$ UHF algebra, let $m \in \N$, and let $S_m$ denote the group of all permutation of 
$\{1, 2, \ldots, m\}$. Let 
$\alpha \colon S_m \to \Aut (A^{\otimes m})$ 
given by
\[
\alpha_{\sigma} (a_1 \otimes a_2 \otimes \ldots \otimes a_m)
=
a_{\sigma^{-1}(1)} \otimes a_{\sigma^{-1}(2)} \otimes \ldots \otimes a_{\sigma^{-1}(m)}
\]
for $\sigma \in S_m$ and $a_1, a_2, \ldots, a_m \in A$. 
It follows from Lemma~3.13 of \cite{AV20} that $\alpha$ is approximately representable and therefore it is weakly tracially approximately representable. 
It follows from  Theorem~3.10 of \cite{AGJP22} that
 $\alpha$ has the weak tracial Rokhlin property.
\end{exa}

 Here is an example  of a finite abelian group action on a unital simple $\mathcal{Z}$-stable C*-algebra which 
simultaneously  is weakly tracially approximately representable and has the weak tracial Rokhlin property, 
 but is neither  tracially approximately representable nor has the  tracial Rokhlin property. 
\begin{exa}\label{Exam2}
Let $\mathcal{Z}$ be the Jiang–Su algebra.
Let $\alpha \colon
 \mathbb{Z}/2\mathbb{Z} \to \Aut (\mathcal{Z} \otimes \mathcal{Z})$ be the flip action. Because $\alpha$ has the weak tracial Rokklin property (see example Example 5.10 of \cite{HO13}),
 $C^*(\mathbb{Z}/2\mathbb{Z}, \mathcal{Z} \otimes \mathcal{Z}, \alpha)$ is simple and has a unique trace. 
 Since  the crossed product von Neumann
algebra is a factor, it follows that the dual action 
$\widehat{\alpha}$ is strongly outer in the sense of Definition~2.5 of \cite{GHV22}. Then, by Corollary~8.6 of \cite{GHV22}, $\widehat{\alpha}$
has the weak tracial Rokhlin property and therefore  $\alpha$ is weakly tracially approximately representable by Theorem~\ref{MainThmCoaction} and 
it is obviously not tracially approximately representable. 
\end{exa}

Here is an example of a finite non-abelian group action on a unital simple 
non-$\mathcal{Z}$-stable
C*-algebra which is weakly tracially approximately representable, but does not have the weak tracial Rokhlin property. 
\begin{exa}
\label{EXAmp3}
Let $G$ be a finite group, let $A$ be the unital simple separable  non-$\mathcal{Z}$-stable AH algebra with stable rank one as in Construction~5.8 of \cite{AV20}, and let $\alpha \colon G \to \Aut(A)$ be the action as in Construction~5.8 of \cite{AV20}. It follows from Corollary~5.21 of \cite{AV20} that $\alpha$ is an approximately representable but pointwise outer action.
So $\alpha$ is also weakly tracially approximately representable. Moreover,
 neither  $\alpha$ nor $\alpha \otimes \mathrm{id}_{\mathcal{Z}}$ has the weak tracial Rokhlin property.
\end{exa}
Here is an example of a finite abelian group action on a unital simple 
non-$\mathcal{Z}$-stable
C*-algebra with the weak tracial Rokhlin property, 
 but
 it is not weakly tracially approximately representable.
 \begin{exa}
 \label{EXAmp4}
 Let 
 $\alpha \colon \mathbb{Z}/2 \mathbb{Z} \to \Aut(A)$ 
 be the action as in Construction~6.1 of \cite{AGP19}. Then, by Lemma~6.4 of \cite{AGP19},  $\alpha$ has the Rokhlin property. It follows from Proposition~4.13 of \cite{AV20} and Theorem~6.15 of \cite{AGP19} that $\alpha$ is not weakly tracially approximately representable. 
 \end{exa}

\end{document}